\providecommand{\keywords}[1]{\textbf{Keywords.} #1}
\providecommand{\MSC}[1]{\textbf{AMS Subject Classifications.} #1}
\newif\ifpdf
\numberwithin{equation}{section}       
\newtheorem{prop} {Proposition} [section]
\newtheorem{thm}[prop] {Theorem}
\newtheorem{defi}[prop] {Definition}
\newtheorem{lem}[prop] {Lemma}
\newtheorem{prop-def}[prop]{Proposition-Definition}
\newtheorem*{thmA}{Theorem A}
\newtheorem*{thmB}{Theorem B}
\newtheorem{rem}[prop]{Remark}
\newcommand{\C}{{\mathbb{C}}}
\newcommand{\B}{{\mathbb{B}}}
\newcommand{\R}{{\mathbb{R}}}
\newcommand{\psh}{{\mathrm{PSH}}}
\title{ Viscosity solutions to complex first eigenvalue equations}
\author{
Soufian Abja\thanks{Institute of Mathematics, Jagiellonian University, Lojasiewicza 6,
30-348 Krakow, Poland (\href{mailto:Soufian.Abja@im.uj.edu.pl}{Soufian.Abja@im.uj.edu.pl}).}

}
\date{}
\begin{document}
\maketitle

\begin{abstract}
We study the viscosity solutions to the first eigenvalue equation. We consider  $\Omega$ a bounded B-regular domain in $\C^n$ and we prove that the Dirichlet problem $\Lambda_{1}(D_{\C}^2 u)=f$ in $\Omega$ and $u=\varphi$ on $\partial\Omega$ admits a unique viscosity solution. We also deal with viscosity theory for operators which are comparable to the first eigenvalue operator.
\end{abstract}

\keywords{viscosity solutions, first eigenvalue, $B$-regular domains}

\vspace{0.2cm}
\MSC{35J70, 35D40}
\section{Introduction}
Viscosity methods provide a powerful tool for the study of non-linear degenerate partial differential equations. They were introduced in the early eighties by Crandall-Lions \cite{cl} as a generalization of the classical solutions to partial differential equations (see the survey \cite{crl}, and references therein).

Recently viscosity methods were applied for the complex Monge-Amp\`ere equations by Eyssidieux-Guedj-Zeriahi \cite{egz11} and Wang \cite{wang}. They were also used for the complex Hessian equations by Lu \cite{lu} and for a more general type of complex Hessian equations by Harvey-Lawson \cite{hl09} and Dinew-Do-T{\^o} \cite{ddt}. 

The aim of this paper is to use the viscosity theory to study an equation that has not been given much attention in the literature especially in the complex setting. The equation is the first eigenvalue of the complex Hessian, its Dirichlet problem is given by
\begin{equation}\label{main}
 \begin{cases}
  \Lambda_1(D_{\C}^2 u)= f(z),& \;\text{in } \Omega,\\
      
     u=\varphi, & \;\text{on}\; \partial\Omega.
    \end{cases}
    \end{equation}
    Here $\Omega$ is a bounded domain in $\C^n$ and $\Lambda_1$ stands for the smallest eigenvalue of
$D_{\C}^2u =\left(\frac{\partial^2 u}{\partial z_j\partial\bar{z}_k} \right)_{1\leq j,k\leq n}$, which is the complex Hessian matrix of $u$, $f$ is a positive continuous function in the closure of $\Omega$ and  $\varphi$ is a continuous function on the boundary of $\Omega$.

The study of the first eigenvalue equation in the real setting was initiated  by Oberman \cite{adam} and Oberman-Silvestre \cite{os}. The 
 partial sums of eigenvalues were studied in \cite{sha} and \cite{wu}, which had been motivated by some early geometric problems. They have also been considered  in \cite{caffarelli}, \cite{amendola}, \cite{hl13}, \cite{vitolo}, \cite{capuzzo}, \cite{pj} and \cite{ferrari}. 

In the complex setting the only related work is the paper of Rashkovskii \cite{rash}. He proved that for a positive current $T$ of bi-degree (1,1) on a domain $\Omega\subset\C^n$ written as follows: \[ T=i\sum_{i,j=1}^nT_{i\bar{j}} dz_i\wedge d\bar{z}_j,\] the eigenvalue operators $\Lambda_k: T\mapsto \Lambda_k(T)$ of the the Hermitian form $T_{j\bar{k}}$ are not continuous in the weak topology even on closed positive 
currents $T$ in spite of continuity of $\sum_{k=1}^{n}\Lambda_k.$ The definition of $\Lambda_k$ in \cite{rash} is through the Goffman-Serrin  construction \cite{gose}. It differs from the viscosity definition that we will use in this paper.

There are several reasons for studying the least eigenvalue on the complex analysis. Among them let us mention the following ones. Control on the first eigenvalue leads to the construction of strictly plurisubharmonic functions. These are in particular related to the existence of subelliptic estimates for the $\bar{\partial}$-Neumann problem see \cite{subelliptic}. Least eigenvalues also appear in exact bounds for solutions to various $\bar{\partial}$-equations with weights (see \cite{de}). Yet another motivation comes from the recent papers \cite{ADO20} and \cite {AO21} where authors studied general complex equations which satisfied some assumptions, but the key assumption was the comparability with complex  Monge-Amp\`ere operator i.e.
\begin{equation}
G(P)\geq C \det(P)^{\frac{1}{n}}, \forall P\in \mathcal{C}_n,
\end{equation}
where $\mathcal{C}_n=\{ A\in\mathbb{H}\;|\; A>0\}$ and $C>0$ is constant.
In the above inequality if we replace $\det^{\frac{1}{n}}$ by $\Lambda_1$ i.e.
\begin{equation}
G(P)\geq C \Lambda_1(P), \forall P\in \mathcal{C}_n,
\end{equation}
we can enlarge the class of the equations studied there.

We are now in the position to announce our main results. Our first main result is the  following comparison principle.
\begin{thmA}\label{maximum}Let $u$ be a bounded viscosity subsolution and $v$ be a bounded viscosity supersolution of (\ref{main}). If $u\leq v$ on $\partial\Omega$ then $u\leq v$ on $\Omega$.
\end{thmA}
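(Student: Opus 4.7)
The proof follows the classical Ishii doubling-of-variables strategy adapted to the complex setting. I argue by contradiction: suppose $M:=\sup_{\overline{\Omega}}(u-v)>0$; since $u\leq v$ on $\partial\Omega$, the supremum is attained at some interior point $z_0\in\Omega$.

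\medskip
\textbf{Step 1 (strictification).} I perturb $u$ into a strict subsolution. Fix $R$ with $\overline{\Omega}\subset B(0,R)$ and put $\rho(z):=|z|^2-R^2$, so $\rho\leq 0$ on $\overline{\Omega}$ and $D_{\C}^2\rho\equiv I$. For $\delta>0$ small set $u_\delta:=u+\delta\rho$. Using that $\Lambda_1$ is concave and monotone on Hermitian matrices, in particular the superadditivity
\[
\Lambda_1(A+B)\geq \Lambda_1(A)+\Lambda_1(B),
\]
a direct check on test functions shows that $u_\delta$ is a viscosity subsolution of the strict equation $\Lambda_1(D_{\C}^2 u_\delta)\geq f+\delta$ in $\Omega$. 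Moreover $u_\delta\leq u\leq v$ on $\partial\Omega$, and if $\delta$ is small enough then $\sup_{\overline{\Omega}}(u_\delta - v)>0$ is still attained in $\Omega$.

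\medskip
\textbf{Step 2 (doubling + jets).} For $\alpha>0$ consider
\[
M_\alpha:=\sup_{(z,w)\in\overline{\Omega}\times\overline{\Omega}}\Bigl(u_\delta(z)-v(w)-\tfrac{\alpha}{2}|z-w|^2\Bigr),
\]
and let $(z_\alpha,w_\alpha)$ attain it. The standard lemma gives $\alpha|z_\alpha-w_\alpha|^2\to 0$, and $(z_\alpha,w_\alpha)\to(\hat z,\hat z)$ for some interior maximizer, so $(z_\alpha,w_\alpha)\in\Omega\times\Omega$ for $\alpha$ large. The Crandall--Ishii--Lions theorem on sums then furnishes, for every $\eta>0$, real symmetric $2n\times 2n$ matrices $X_\alpha,Y_\alpha$ with
\[
(\alpha(z_\alpha-w_\alpha),X_\alpha)\in\overline{J^{2,+}}u_\delta(z_\alpha),\qquad (\alpha(z_\alpha-w_\alpha),Y_\alpha)\in\overline{J^{2,-}}v(w_\alpha),
\]
and the block inequality forcing in particular $X_\alpha\leq Y_\alpha$. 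Passing to the complex Hessian parts preserves the partial order, giving Hermitian matrices $X_\alpha^{\C}\leq Y_\alpha^{\C}$ (one checks this directly from $v^*D_{\C}^2 u\, v = \tfrac14(v^T D^2 u\, v + (iv)^T D^2 u\, (iv))$).

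\medskip
\textbf{Step 3 (closing the contradiction).} Applying the strict subsolution inequality for $u_\delta$ and the supersolution inequality for $v$ yields
\[
f(z_\alpha)+\delta\leq \Lambda_1(X_\alpha^{\C})\leq \Lambda_1(Y_\alpha^{\C})\leq f(w_\alpha),
\]
where the middle inequality comes from the min--max characterization $\Lambda_1(H)=\min_{|\xi|=1}\xi^*H\xi$ together with $X_\alpha^{\C}\leq Y_\alpha^{\C}$. Since $z_\alpha-w_\alpha\to 0$ and $f$ is continuous on $\overline{\Omega}$, letting $\alpha\to\infty$ gives $\delta\leq 0$, a contradiction. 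Hence $u\leq v$ on $\Omega$.

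\medskip
\textbf{Main obstacle.} The delicate point is not the doubling itself but the strictification in Step~1, which relies on the concavity/superadditivity of $\Lambda_1$ on Hermitian matrices; without it, adding a smooth strictly psh bump would only give a (non-strict) subsolution and the final contradiction would collapse. A secondary technical point is verifying that the real $2n\times 2n$ ordering $X_\alpha\leq Y_\alpha$ coming from the Crandall--Ishii--Lions lemma descends to the complex Hermitian parts, which is the mechanism that couples the real-variable viscosity machinery to the complex operator $\Lambda_1$.
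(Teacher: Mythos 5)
Your proof is correct and follows essentially the same route as the paper: strictify the subsolution using the superadditivity $\Lambda_1(A+B)\geq\Lambda_1(A)+\Lambda_1(B)$ and a smooth plurisubharmonic bump, double variables and invoke the Jensen--Ishii lemma, pass the real ordering $X_\alpha\leq Y_\alpha$ to the Hermitian $(1,1)$-parts via evaluation on $(\xi,\xi)$ and $(i\xi,i\xi)$, and then derive a contradiction from $f+\delta\leq\Lambda_1(X_\alpha^\C)\leq\Lambda_1(Y_\alpha^\C)\leq f$ as $\alpha\to\infty$. The only cosmetic difference is that you perturb once by $\delta(|z|^2-R^2)$, whereas the paper first treats the special case where $u-\varepsilon|z-z_0|^2$ is itself a subsolution and then reduces the general case to it; the underlying mechanism is identical.
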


When we have the comparison principle, the existence of the solution to the Dirichlet problem (\ref{main}) can be established by the classical Perron method modulo the existence of subsolution and supersolution.
To construct a subsolution to the Dirichlet problem (\ref{main}), we restrict our attention to the class of domains that admit strong plurisubharmonic barriers. Such domains were investigated by Sibony \cite{sib} and are called  $B$-regular domains. 
\begin{thmB}\label{thmb}Let $\Omega$ be a $B$-regular domain. The Dirichlet problem (\ref{main}) admits a unique viscosity solution $u\in C(\overline{\Omega})$.
\end{thmB}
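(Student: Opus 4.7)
The plan is to run the Perron--Ishii scheme, using Theorem A as the comparison principle; the only non-routine ingredients are the construction of a global subsolution and a global supersolution of \eqref{main} attaining $\varphi$ on $\partial\Omega$. Uniqueness is immediate: if $u_1,u_2\in C(\overline\Omega)$ both solve \eqref{main}, then applying Theorem A twice with the boundary equality $u_1=u_2=\varphi$ gives $u_1\equiv u_2$.

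For existence, $B$-regularity furnishes two inputs through Sibony's theorem: a strictly plurisubharmonic $\rho\in C(\overline\Omega)$ with $\rho|_{\partial\Omega}=0$ and $\rho<0$ in $\Omega$ (satisfying $\Lambda_1(D_\C^2\rho)\geq c$ in the viscosity sense for some $c>0$), together with continuous plurisubharmonic extensions $h_+,h_-\in C(\overline\Omega)$ of $\varphi$ and $-\varphi$, respectively. Set $A:=\|f\|_{C(\overline\Omega)}/c$ and
\[
\underline u:=h_+ + A\rho,\qquad \overline u:=-h_-.
\]
Both are continuous on $\overline\Omega$ with $\underline u=\overline u=\varphi$ on $\partial\Omega$. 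The sum of a plurisubharmonic function and a strictly plurisubharmonic one is strictly plurisubharmonic with the same constant, so $\underline u$ satisfies $\Lambda_1(D_\C^2\underline u)\geq Ac\geq f$ in the viscosity sense and is a subsolution. Dually, $-h_-$ has $D_\C^2\overline u\leq 0$ in the viscosity sense, whence $\Lambda_1(D_\C^2\overline u)\leq 0\leq f$, so $\overline u$ is a supersolution. Theorem A then forces $\underline u\leq\overline u$ on $\overline\Omega$.

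Next I would introduce the Perron envelope
\[
u(z):=\sup\bigl\{w(z):w\text{ is a viscosity subsolution of }\eqref{main},\ \underline u\leq w\leq\overline u\bigr\},
\]
so that automatically $\underline u\leq u\leq\overline u$. The classical Ishii arguments give that $u^*$ is a viscosity subsolution and $u_*$ a viscosity supersolution of \eqref{main}; the second step is the usual bump construction, which goes through here using only continuity of $P\mapsto\Lambda_1(P)$ on Hermitian matrices. The sandwich yields $u^*\leq\varphi\leq u_*$ on $\partial\Omega$, so Theorem A gives $u^*\leq u_*$ on $\overline\Omega$. Combined with $u_*\leq u\leq u^*$, this forces $u^*=u_*$, hence $u\in C(\overline\Omega)$ is the desired solution.

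The main obstacle I anticipate is not the Perron machinery, which is standard once the comparison principle of Theorem A is available, but the production of the supersolution $\overline u$ with prescribed boundary values. The subsolution uses only the existence of a strictly plurisubharmonic defining function, whereas $\overline u$ relies on the dual half of Sibony's theorem (continuous plurisubharmonic extension of continuous boundary data), which is precisely what characterises the $B$-regular class and justifies its appearance as a hypothesis in Theorem B.
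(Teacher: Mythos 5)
Your strategy is the same as the paper's (barrier construction plus Perron and Theorem~A), but the two barriers you build differ from the paper's and deserve comment.

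Your supersolution $\overline u=-h_-$, where $h_-$ is a continuous psh extension of $-\varphi$, is correct and is a genuine alternative to the paper's choice: the paper instead solves $\Delta h=0$, $h=\varphi$ on $\partial\Omega$, and uses $n\Lambda_1(D^2_\C h)\le\Delta h$. Your version is arguably more economical, since it reuses the same half of Sibony's theorem that produces the psh extension and avoids invoking solvability of the Dirichlet problem for the Laplacian. The verification is as you implicitly indicate: a $C^2$ lower test function $\psi$ for $-h_-$ gives an upper test function $-\psi$ for the psh function $h_-$, so $D^2_\C\psi(z_0)\le 0$ and $[\Lambda_1(D^2_\C\psi)]^+(z_0)=0\le f(z_0)$.

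Your subsolution is where there is a gap. You take $\rho\in C(\overline\Omega)$ strictly psh only in the viscosity sense and then assert that $h_+ + A\rho$ satisfies $\Lambda_1(D^2_\C(h_++A\rho))\ge Ac$ in the viscosity sense because ``the sum of a psh function and a strictly psh one is strictly psh with the same constant.'' That is true at the level of currents, but it is not the definition being used here, and in the viscosity framework you cannot add two merely continuous subsolutions by taking a test function for the sum and splitting it: a $C^2$ function touching $h_++A\rho$ from above does not decompose into a test function for $h_+$ and one for $\rho$. The paper sidesteps this precisely by using the \emph{smooth} strictly psh exhaustion $\psi$ from the second characterization in Theorem~\ref{sib} (with $\Lambda_1(D^2_\C\psi)\ge 1$, $\psi<0$, $\psi\to 0$ on $\partial\Omega$), so that if $q$ is a $C^2$ upper test function for $\tilde\varphi+B\psi$ at $z_0$ then $q-B\psi$ is a legitimate $C^2$ upper test function for $\tilde\varphi$; super-additivity of $\Lambda_1$ then finishes. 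To repair your argument, replace $\rho$ by this smooth $\psi$ (or justify the sum via a regularization/sup-convolution lemma, which is exactly what the smooth barrier lets you avoid). The uniqueness step and the Perron machinery are as in the paper.
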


The paper is organized as follows: In Section 2, we collect some preliminaries. In Section 3, we define and develop the viscosity theory for the first eigenvalue operator and we prove our main results. In Section 4, we deal with viscosity theory for operators that are comparable to the first eigenvalue.

\section{Preliminaries}
In this section, we introduce the definitions and notations that we need through our paper.

Let $\mathbb{H}$ denote the set of all Hermitian  $n\times n$ matrices. We  denote also by $\mathbb{S}^{2n}$  the set of all symmetric $2n\times 2n$ matrices. For any Hermitan matrix $A$ we denote by $\Lambda_{k}(A)$, $k=1,...,n$ its eigenvalues and order them so that $\Lambda_1\leq ...\leq \Lambda_n$. We have the following formulas:
\begin{eqnarray}\label{egen}
 \Lambda_k(A)&=&\max_{\substack{M\subset \C^n\\ \dim M=k}}\min_{\substack {X\in M \\||X||=1}}\langle X,AX\rangle \nonumber\\
  &=&\min_{\substack{M\subset \C^n\\ \dim M=n-k+1}}\max_{\substack {X\in M \\||X||=1}}\langle X,AX\rangle,
\end{eqnarray}
where $\langle,\rangle$ is the usual inner product on $\C^n$ and $||.||$ is its associated norm. The above equality is well known in the literature see \cite[Corollary III.1.5]{bhatia}) for example.

The following lemma summarizes the properties of the first eigenvalue.
\begin{lem}\label{eigenvalues} The eigenvalues satisfy the following properties
\begin{itemize}
\item $\Lambda_k(\alpha A)=\alpha\Lambda_k(A)$ for all $\alpha>0$ and $A\in \mathbb{H}$.
\item $\Lambda_k(A+B)\geq \Lambda_k(A)+\Lambda_1(B)$ for all $A,B\in \mathbb{H}$.
\item $\Lambda_k(A+B)\leq\Lambda_k(A)+\Lambda_n(B)$ for all $A,B\in \mathbb{H}$.
\item $\Lambda_k$ are elliptic operators, which means
$$\Lambda_k(A+P)\geq \Lambda_k(A),$$ 
for all $A\in \mathbb{H}$ and $P\geq 0$.
\end{itemize}
\end{lem}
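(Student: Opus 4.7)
The plan is to derive all four properties directly from the variational characterization (\ref{egen}), since each of them is a standard consequence of the min-max (or max-min) formula for eigenvalues of Hermitian matrices.

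First I would dispatch the homogeneity property: for any $\alpha>0$, substituting $\alpha A$ into the max-min formula in (\ref{egen}) lets us factor $\alpha$ out of the inner product $\langle X, \alpha A X\rangle = \alpha\langle X, AX\rangle$, and the positivity of $\alpha$ preserves both the $\max$ and the $\min$. This gives $\Lambda_k(\alpha A)=\alpha\Lambda_k(A)$ without effort.

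Next, for the additive inequalities, I would fix $A,B\in\mathbb{H}$ and start from the first identity of (\ref{egen}). For any unit vector $X\in\C^n$, the Rayleigh quotient satisfies $\Lambda_1(B)\leq \langle X,BX\rangle\leq \Lambda_n(B)$, so
\begin{equation*}
\langle X,(A+B)X\rangle = \langle X,AX\rangle+\langle X,BX\rangle \geq \langle X,AX\rangle+\Lambda_1(B).
\end{equation*}
Taking $\min$ over unit $X$ in a $k$-dimensional subspace $M$, then $\max$ over such $M$, yields $\Lambda_k(A+B)\geq \Lambda_k(A)+\Lambda_1(B)$. The upper bound $\Lambda_k(A+B)\leq \Lambda_k(A)+\Lambda_n(B)$ is proved the same way, using the second line of (\ref{egen}) and the inequality $\langle X,BX\rangle\leq \Lambda_n(B)$.

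Finally, ellipticity is an immediate corollary of the second property: if $P\geq 0$ then $\Lambda_1(P)\geq 0$, so $\Lambda_k(A+P)\geq \Lambda_k(A)+\Lambda_1(P)\geq \Lambda_k(A)$. There is no real obstacle in this lemma; the only point that requires a bit of care is ensuring that the $\max$–$\min$ is taken in the right order to get the sharp bounds (i.e.\ using the first expression in (\ref{egen}) to produce a lower bound and the second to produce an upper bound), which is precisely the role played by the two dual characterizations recalled from \cite[Corollary III.1.5]{bhatia}.
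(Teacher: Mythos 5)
Your proof is correct and follows essentially the same route as the paper: everything is reduced to the Courant--Fischer variational characterization in (\ref{egen}). The only difference is that the paper simply cites Weyl's inequalities for the second and third items (\cite[Corollary III.2.2]{bhatia}), whereas you re-derive them from the min-max formula, which is the standard proof of those inequalities in any case.
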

\begin{proof}The first statement is a direct consequence of (\ref{egen}). The second and third are known in the literature as Weyl's inequalities (see \cite[Corollary III.2.2]{bhatia}). The last one is a consequence of the second one.
\end{proof}
\begin{rem}
\begin{itemize}As a corollary of the above lemma the following two statements hold:
\item $A \mapsto \Lambda_1(A)$ is concave i.e.
 $$\Lambda_1(\alpha A+(1-\alpha)B)\geq \alpha\Lambda_1(A)+(1-\alpha)\Lambda_1(B),$$ for all $A,B\in \mathbb{H}$ and  $\alpha\in(0,1).$
 \item $A \mapsto \Lambda_n(A)$ is convex i.e.
 $$\Lambda_n(\alpha A+(1-\alpha)B)\leq \alpha\Lambda_n(A)+(1-\alpha)\Lambda_n(B),$$ for all $A,B\in \mathbb{H}$ and  $\alpha\in(0,1).$
\end{itemize}
\end{rem}
We will denote throughout this paper by $\mathbb{B}(z,r)$ the ball of center $z$ and radius $r$ and by
$$u^{*}(z)=\inf_{r>0}\sup\{u(y); y\in\B(z,r)\},$$
$$u_{*}(z)=\sup_{r>0}\inf\{u(y); y\in\B(z,r)\},$$
the lower and upper semi-continuous envelopes of $u$.

\begin{defi}Let $\Omega$ be a domain in $\C^n$. A function  $u:\Omega\rightarrow \R$ is called plurisubharmonic if it is upper semi-continuous, $L^{1}_{loc}$ and satisfies $$dd^c u\geq 0$$
in weak sense of positive currents, where $d:=\partial+\bar{\partial}$ and $d^c:=\frac{i}{2\pi}(\partial-\bar{\partial})$.

 The space of plurisubharmonic functions will be denoted by $\psh(\Omega)$.
\end{defi}
A bounded domain $\Omega$ in $\C^n$ is said to be a $B$-regular if one of the following equivalent statements is satisfied.
\begin{thm}\cite{sib}\label{sib}For a bounded domain in $\C^n$ the following are equivalent:
\begin{itemize}
\item For every $z_0\in\partial\Omega$, there exists $u\in \psh(\Omega)$ such that 
$$\lim_{z \rightarrow z_0} u(z)=0 \; \text{and} \;u^{*}|_{\overline{\Omega}\setminus \{ z_0\}}<0.$$

\item In $\Omega$ there exists a smooth exhaustive function such that 
$$\sum_{j,k=1}^n \frac{\partial^2 \psi}{\partial z_j\partial\bar{z}_k}\xi_j\bar{\xi_k} \geq |\xi|^2, \xi\in \C^n.$$
\item  For every $f\in C(\partial \Omega)$ there exists $u\in \psh(\Omega)\cap C(\overline{\Omega})$ such that $u|_{\partial \Omega}=f$.
\end{itemize}
For the proof of this theorem see \cite{bl96}.
\end{thm}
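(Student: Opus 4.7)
The statement asserts the equivalence of three standard characterizations of B-regularity (the paper cites \cite{bl96} for the proof). My plan is to establish the cyclic chain $(2)\Rightarrow(1)\Rightarrow(3)\Rightarrow(2)$, which is the strategy going back to Sibony.

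For $(2)\Rightarrow(1)$: given the smooth exhaustion $\psi$ with complex Hessian dominating the identity, I would construct a strong plurisubharmonic barrier at each boundary point $z_0$ by setting
$$u(z) := \psi(z) - \psi(z_0) - \varepsilon\,|z - z_0|^2,$$
after subtracting a suitable normalizing constant so that $u(z) \to 0$ as $z \to z_0$. Because $dd^c|z-z_0|^2$ is a bounded multiple of the standard Kähler form while $dd^c\psi$ strictly dominates the Kähler form by hypothesis, $u$ remains psh for small enough $\varepsilon$. The exhaustion property, combined with the single zero of the penalty $\varepsilon|z-z_0|^2$ at $z_0$, forces $u^* < 0$ on $\overline{\Omega}\setminus\{z_0\}$.

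For $(1)\Rightarrow(3)$: this is Perron--Bremermann with strong barriers. Define the Perron family
$$\mathcal{F} := \{\, v\in \psh(\Omega)\cap L^\infty(\Omega) : \limsup_{z\to\zeta} v(z) \leq f(\zeta) \text{ for all } \zeta\in\partial\Omega\,\}$$
and set $u := \sup_{v\in\mathcal{F}} v$. Standard envelope arguments (Choquet-type) show $u^*$ is psh. To promote $u$ to an element of $C(\overline{\Omega})$ taking boundary value $f$, I would use the strong barrier $w_{z_0}$ provided by (1) at each $z_0\in\partial\Omega$: for any $\delta > 0$ and large $C$, the functions $f(z_0)-\delta + Cw_{z_0}$ and $f(z_0)+\delta - Cw_{z_0}$ serve respectively as admissible sub- and super-competitors bracketing $u$ in a neighborhood of $z_0$, from which boundary continuity follows, and continuity inside is automatic from the envelope.

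For $(3)\Rightarrow(2)$: apply (3) to the continuous boundary datum $\zeta\mapsto -|\zeta|^2$ to obtain $h\in\psh(\Omega)\cap C(\overline{\Omega})$ with $h|_{\partial\Omega}(z)=-|z|^2$. Then
$$\psi_0(z) := h(z) + |z|^2$$
is a continuous psh function on $\overline{\Omega}$, vanishes on $\partial\Omega$, is strictly negative inside by the maximum principle, and satisfies $dd^c\psi_0 \geq \omega$ in the sense of currents. Hence $\{\psi_0 < c\}\Subset\Omega$ for every $c < 0$, giving the exhaustion property. The remaining task is to upgrade $\psi_0$ from continuous to smooth while preserving both the strict lower bound on the complex Hessian and the exhaustive behavior near the boundary; this is done by convolving with a mollifier on relatively compact subsets and gluing via a Richberg-type maximum regularization, which preserves psh and the strict positivity $dd^c\psi_0 \geq \omega$ up to arbitrarily small loss absorbed into the slack provided by the $|z|^2$ term.

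The main obstacle is the smoothing step in $(3)\Rightarrow(2)$: mollification is only local, so one must patch smooth strictly psh pieces together without destroying strict plurisubharmonicity, which is exactly the content of Richberg's approximation theorem. The other two implications are essentially direct once the correct barrier and Perron family are chosen.
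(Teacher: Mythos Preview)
The paper does not give its own proof of this theorem: it is quoted from \cite{sib} and the reader is referred to \cite{bl96} for the argument. Your cyclic scheme $(2)\Rightarrow(1)\Rightarrow(3)\Rightarrow(2)$ is exactly the standard route taken in those references, and the sketch is correct.

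Two small points worth tightening. In $(2)\Rightarrow(1)$ you write $\psi(z_0)$ with $z_0\in\partial\Omega$, where $\psi$ is a priori only defined on $\Omega$; what one actually does is first normalize the bounded exhaustion so that $\psi<0$ on $\Omega$ and $\psi\to 0$ at $\partial\Omega$ (this is how the paper itself uses $\psi$ later, in the proof of Theorem~B), after which your barrier becomes simply $u(z)=\psi(z)-\varepsilon|z-z_0|^2$. In $(1)\Rightarrow(3)$, interior continuity of the Perron--Bremermann envelope is not quite ``automatic from the envelope''; one needs Walsh's lemma (or the translate comparison argument) once the boundary values are shown to be attained continuously via the strong barriers. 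With these clarifications your proposal matches the literature proof.
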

\section{Viscosity solution}
Our goal here is to study the first eigenvalue equation in the viscosity sense. We consider the equation
\begin{equation}\label{first} 
      \Lambda_1(D_{\C}^{2}u)(z)=f(z),\; \text{in}\; \;\Omega,
\end{equation}
where $\Omega\subset\C^n$ is an $B$-regular domain, and $ f: \overline{\Omega}\rightarrow \R$ is a positive continuous function. The unknown function $u$ is a plurisubharmonic function $ u:\Omega\rightarrow \R$.

The equation (\ref{first}) can also be written as
\begin{equation}
\Lambda_1(D_{\C}^{2}u)(z)-f(z)=0,\; \text{in}\; \Omega.
\end{equation}
This equation is degenerate elliptic when we restrict considerations  to space of plurisubharmonic functions.  Identifying $\C^n$  with $\R^{2n}$, we set 
 $$F(z,Q):=
 \begin{cases}
  \Lambda_1(Q^{1,1})-f(z)  ,& \;\text{if } Q^{1,1}\geq 0,\\
      -\infty, & \;\text{otherwise}, 
    \end{cases}$$
where $(z,Q)\in \Omega\times \mathbb{S}^{2n}$ and 
$Q^{1,1}\in \mathbb{H}^n$ is the Hermitian (1,1)-part of $Q\in \mathbb{S}^{2n}$ considered as a real quadratic form on $\C^n$. The function $F$ is lower semi-continuous on $\Omega\times \mathbb{S}^{2n}$ and continuous on its domains $\{F>-\infty\}=\Omega\times \{Q\in\mathbb{S}^{2n}: Q^{1,1}\geq 0\}$.

 We introduce now the definitions of viscosity subsolutions and viscosity supersolutions to the first eigenvalue equation.
\begin{defi}Let $u: \Omega\rightarrow \R\cup\{-\infty\}$ and let $\varphi$ be a $C^2$ in a neighborhood of $z_0\in\Omega$. We say that $\varphi$ touches $u$ from above at $z_{0}$ if $\varphi(z_0)=u(z_0)$ and 
$\varphi(z)\geq u(z)$ for every $z$ in a neighborhood of $z_0$.

 We say  that $\varphi$ touches $u$ from below at $z_{0}$ if $\varphi(z_0)=u(z_0)$ and 
$\varphi(z)\leq u(z)$ for every $z$ in neighborhood of $z_0.$
\end{defi}
\begin{defi}\label{1} An upper semi-continuous function  $u: \Omega \rightarrow \R\cup \{-\infty\}$ is a viscosity sub-solution to (\ref{first})
 if $u\neq -\infty$ and for any $z_0\in\Omega$ and any $C^2$ function $\varphi$ that touches $u$ from above at $z_0$ we have
 $$\Lambda_1(D_{\C}^2\varphi)\geq f(z)\;\text{at}\; z_0.$$
 
 A lower semi-continuous function  $u: \Omega \rightarrow \R\cup \{-\infty\}$ is a viscosity super-solution to (\ref{first})
 if $u\neq -\infty$ and for  any $z_0$ and any $C^2$ function $\psi$ that touches $u$ from below at $z_0$ we have
 $$[\Lambda_1(D_{\C}^2\psi)]^+\leq f(z)\; \text{at}\; z_0.$$
 Here $[\Lambda_1(D_{\C}^2\psi)]^+$  is defined to be $\Lambda_1(D_{\C}^2\psi)$ if it is positive and $0$ otherwise.
 
 A function $u: \Omega \rightarrow \R$ is a viscosity solution to (\ref{first}) if it is both a viscosity subsolution and a viscosity
supersolution to (\ref{first}). A viscosity solution is automatically a continuous function in $\Omega$.
\end{defi}
The following proposition follows easily from the definition of a subsoution.
\begin{prop}\label{glue}
Assume that $G\subsetneq \Omega$ is an open set. Suppose that $u$ is a viscosity
subsolution of (\ref{first}) in $G$ and $v$ is a viscosity subsolution of (\ref{first}) in $\Omega$ such that
$$\limsup_{z\rightarrow z_0} u(z)\leq v(z_0),$$
for every $z_0 \in \partial G \cap\Omega$. Then, the function
$$h(z)=
 \begin{cases}
      \max(u(z),v(z)),& \;\text{in}\; G,\\
      v(z) ,& \;\text{in}\;\Omega\setminus G
    \end{cases}$$
is a viscosity subsolution of (\ref{first}) in $\Omega$.
\end{prop}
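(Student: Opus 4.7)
The plan is to verify two things: first, that $h$ is upper semi-continuous on $\Omega$ (so that it qualifies as a candidate viscosity subsolution), and second, that the subsolution inequality holds at every point $z_0 \in \Omega$ against any admissible test function. Points of $G$ and points of $\Omega \setminus \overline{G}$ are open cases handled directly by the hypotheses on $u$ and $v$; all the content is at the interface $\partial G \cap \Omega$, where the hypothesis $\limsup_{z \to z_0} u(z) \leq v(z_0)$ is exactly calibrated to make the gluing work.

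For upper semi-continuity, first note that since $G$ is open, $\partial G \cap \Omega \subset \Omega \setminus G$, so at such points $h = v$. For a sequence $z_n \to z_0 \in \partial G \cap \Omega$, I would split the indices by whether $z_n \in G$ or $z_n \in \Omega \setminus G$. In the first case $h(z_n) = \max(u(z_n), v(z_n))$; by the hypothesis $\limsup u(z_n) \leq v(z_0)$, and by upper semi-continuity of $v$ also $\limsup v(z_n) \leq v(z_0)$, so $\limsup h(z_n) \leq v(z_0) = h(z_0)$. In the second case $h(z_n) = v(z_n)$ and the conclusion is immediate. Upper semi-continuity on the open sets $G$ and $\Omega \setminus \overline{G}$ is clear from that of $u$ and $v$.

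For the subsolution condition, fix $z_0 \in \Omega$ and a $C^2$ function $\varphi$ touching $h$ from above at $z_0$. If $z_0 \in \Omega \setminus \overline{G}$, then $h \equiv v$ in a neighborhood, so $\varphi$ touches $v$ from above and the conclusion follows from the subsolution property of $v$. If $z_0 \in G$, then $h(z_0)$ equals $u(z_0)$ or $v(z_0)$ (possibly both); in the first subcase, since $h \geq u$ on $G$, the function $\varphi$ touches $u$ from above at $z_0$, so the subsolution property of $u$ applies; in the second subcase one uses $h \geq v$ on all of $\Omega$ analogously for $v$. If $z_0 \in \partial G \cap \Omega$, then $h(z_0) = v(z_0)$ and $\varphi(z_0) = v(z_0)$; since $h \geq v$ everywhere on $\Omega$, we have $\varphi \geq h \geq v$ near $z_0$, so $\varphi$ touches $v$ from above at $z_0$, and again the subsolution property of $v$ closes the argument.

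The only delicate step is the boundary case $z_0 \in \partial G \cap \Omega$: one must verify both that $h$ remains upper semi-continuous there and that any test function touching $h$ from above automatically touches $v$ from above. Both facts hinge crucially on the hypothesis $\limsup_{z \to z_0} u(z) \leq v(z_0)$; without it $h$ could jump upward at $\partial G$ and no test function would even exist that touches from above. No ellipticity or eigenvalue-specific property is actually invoked — the argument is purely structural and would apply to any degenerate elliptic operator — so no serious obstacle is expected beyond careful bookkeeping of the cases.
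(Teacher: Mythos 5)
Your proof is correct and is exactly the routine verification that the paper elides when it says this proposition ``follows easily from the definition of a subsolution'' (the paper provides no written proof). The case split into $G$, $\partial G\cap\Omega$, and $\Omega\setminus\overline G$ is the right one; the check that $h$ is upper semi-continuous at $\partial G\cap\Omega$ is the step that genuinely uses the hypothesis $\limsup_{z\to z_0}u(z)\le v(z_0)$, and it is essential because Definition~\ref{1} requires a viscosity subsolution to be u.s.c.\ in the first place. The observation that $h\ge v$ on all of $\Omega$ reduces the interface case to the subsolution property of $v$ alone, and the interior cases are immediate. One small clarification worth making explicit: the reason no test function could touch $h$ from above at a failure of u.s.c.\ is secondary; the primary reason the hypothesis is needed is simply that $h$ must be u.s.c.\ to qualify as a candidate subsolution at all. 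You are also right that nothing here is specific to $\Lambda_1$ --- only degenerate ellipticity (monotonicity under touching from above) is used --- which is consistent with the paper later reusing the same gluing step for the more general operators $G$ in Section~4.
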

Since the fundamental Jensen-Ishii  maximum principle will be stated in terms
of semi-jets, it is convenient to use these notions which we now recall following \cite{crl}, in order to characterize as well the notions of sub/super solutions.
\begin{defi} 1) Let $u: \Omega \rightarrow \R$ be an upper semi-continuous and $z_0\in\Omega$. The super-differential jet of $u$ at $z_0$ is given by
\begin{multline*}
J^{2,+}u(z_0)=\{(p,Q)\in \R^{2n}\times \mathbb{S}^{2n} \;:\;
u(z_0+\xi)\leq u(z_0)+<p,\xi>\\
+\frac{1}{2}<Q\xi,\xi>+o(|\xi|^2)\; \forall\xi\in \R^{2n} ,\; |\xi|<<1\}.
\end{multline*}
 2) Let $u: \Omega \rightarrow \R$ be a lower semi-continuous and $z_0\in\Omega$. The sub-differential jet of $u$ at $z_0$ is given by
 \begin{multline*}
$$J^{2,-}u(z_0)=\{(p,Q)\in \R^{2n}\times \mathbb{S}^{2n} \;/\;
u(z_0+\xi)\geq u(z_0)+<p,\xi>\\
+\frac{1}{2}<Q\xi,\xi>+o(|\xi|^2)\; \forall\xi\in \R^{2n} ,\; |\xi|<<1\}.
\end{multline*}
3) If $u$ is continuous we define the differential 2-jet as
$$J^{2}u(z_0)=J^{2,+}u(z_0)\cap J^{2,-}u(z_0).$$
\end{defi}
\begin{rem}If $u$ is twice differentiable at $z_0$ we have 
$$J^{2,+}u(z_0)=\{(Du(z_0),Q): Q\geq D^2u(z_0)\}$$
and $$J^{2,-}u(z_0)=\{(Du(z_0),Q): Q\leq D^2u(z_0)\}$$ 
so that $$J^{2}u(z_0)=\{(Du(z_0),D^2u(z_0))\}.$$ 
\end{rem}
By the continuity of $F$ in its domain we can extend the previous definition to the notion of approximate super-differential and sub- differential jets as follows:
\begin{multline*}
\bar{J}^{2,+}u(z_0)=\{(p,Q)\in \R^{2n}\times \mathbb{S}^{2n}: \exists (z_n,p_n,Q_n)\in \Omega\times \R^{2n}\times \mathbb{S}^{2n},(p_n,Q_n)\in J^{2,+}u(z_n)\\
\text{and}\; (z_n,p_n,Q_n)\rightarrow(z_0,p,Q)\}.
\end{multline*}
\begin{multline*}
\bar{J}^{2,-}u(z_0)=\{(p,Q)\in \R^{2n}\times \mathbb{S}^{2n}: \exists (z_n,p_n,Q_n)\in \Omega\times \R^{2n}\times \mathbb{S}^{2n},(p_n,Q_n)\in J^{2,-}u(z_n)\\
\text{and}\; (z_n,p_n,Q_n)\rightarrow(z_0,p,Q)\}.
\end{multline*}
From the above definition of a super-jet, we can equivalently define the subsolutions and supersolutions as in the following proposition.
\begin{prop}\label{2}
An upper semi-continuous function $u : \Omega \mapsto \R$ is a viscosity subsolution of (\ref{first}) if and only if 
for all $z_0\in\Omega,\; (p,Q)\in\bar{J}^{2,+}(z_0),$
$$\Lambda_1(Q)\geq f(z_0).$$
																			
A lower semi-continuous function $u : \Omega \mapsto \R$ is a viscosity supersolution of (\ref{first}) if and only if for all $z_0\in\Omega$, $(p,Q)\in\bar{J}^{2,-}(z_0)$, 
$$[\Lambda_1(Q)]^+\leq f(z_0).$$
\end{prop}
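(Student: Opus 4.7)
The plan is to establish the characterization in two standard steps: first relate the test-function definition of sub/supersolutions (Definition \ref{1}) to the condition on the semi-jets $J^{2,\pm}u$, and then extend this to the closures $\bar{J}^{2,\pm}u$ by a continuity argument. The key ingredients are the continuity of the map $A\mapsto \Lambda_1(A)$ on $\mathbb{H}$ (which follows from the min-max formula (\ref{egen})) together with the continuity of $f$ on $\Omega$.

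For the subsolution case, I first show equivalence between the test-function definition and the condition on $J^{2,+}u(z_0)$. The forward direction is immediate from Taylor's theorem: if $\varphi$ is $C^2$ and touches $u$ from above at $z_0$, then its second-order expansion shows that $(D\varphi(z_0), D^2\varphi(z_0))\in J^{2,+}u(z_0)$, and the assumed condition gives $\Lambda_1(D_\C^2 \varphi(z_0))\geq f(z_0)$. For the converse, given $(p,Q)\in J^{2,+}u(z_0)$, I introduce the family of quadratic functions
$$\varphi_\e(z) := u(z_0) + \langle p, z-z_0\rangle + \tfrac{1}{2}\langle Q(z-z_0), z-z_0\rangle + \e|z-z_0|^2,$$
which satisfy $\varphi_\e(z_0) = u(z_0)$ and, for $\e>0$ and $z$ sufficiently close to $z_0$, touch $u$ from above at $z_0$, since the $\e|z-z_0|^2$ term dominates the $o(|\xi|^2)$ error built into the definition of $J^{2,+}u(z_0)$. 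Applying the test-function subsolution condition to $\varphi_\e$ and using that $D_\C^2\varphi_\e(z_0) = Q^{1,1}+\e\,\mathrm{Id}$, where $Q^{1,1}$ denotes the Hermitian $(1,1)$-part of $Q$, yields $\Lambda_1(Q^{1,1})+\e\geq f(z_0)$; letting $\e\to 0$ delivers the required inequality.

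The passage from $J^{2,+}$ to $\bar{J}^{2,+}$ is then a direct continuity argument. The inclusion $J^{2,+}u(z_0)\subset \bar{J}^{2,+}u(z_0)$ makes one direction immediate. Conversely, given $(p,Q)\in \bar{J}^{2,+}u(z_0)$, one selects an approximating sequence $(z_n, p_n, Q_n)\to(z_0, p, Q)$ with $(p_n, Q_n)\in J^{2,+}u(z_n)$; the $J^{2,+}$ characterization just proved then gives $\Lambda_1(Q_n^{1,1})\geq f(z_n)$ for each $n$, and passing to the limit using the joint continuity of $\Lambda_1$ and $f$ produces $\Lambda_1(Q^{1,1})\geq f(z_0)$.

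The supersolution case runs in complete parallel, with lower perturbations $\psi_\e(z) = u(z_0) + \langle p, z-z_0\rangle + \tfrac{1}{2}\langle Q(z-z_0), z-z_0\rangle - \e|z-z_0|^2$ and the subjets $J^{2,-}$, $\bar{J}^{2,-}$ replacing $J^{2,+}$, $\bar{J}^{2,+}$. The positive-part notation $[\,\cdot\,]^+$ in the supersolution condition causes no trouble, since $t\mapsto [t]^+$ is continuous, so both the $\e\to 0$ passage and the closure argument go through unchanged. The only step that requires genuine care is the quadratic-perturbation argument producing a $C^2$ function that truly touches $u$ from above (resp.\ below) on a neighborhood of $z_0$, but this is a routine and standard construction and I do not expect any deeper obstacle.
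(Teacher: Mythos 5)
Your proof is correct and follows the standard two-step argument — first the equivalence between the test-function definition and the pointwise semi-jet condition on $J^{2,\pm}u(z_0)$ via the quadratic perturbations $\pm\e|z-z_0|^2$, then the passage to the closures $\bar{J}^{2,\pm}$ by continuity of $\Lambda_1$, of the projection $Q\mapsto Q^{1,1}$, and of $f$ — which is precisely the argument the paper delegates to \cite{crl} and \cite[Proposition 6.11]{gz} without writing out. Your handling of the supersolution side via continuity of $t\mapsto[t]^+$ and the identity $\Lambda_1(Q^{1,1}\pm\e\,\mathrm{Id})=\Lambda_1(Q^{1,1})\pm\e$ matches the intended treatment.
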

The proof of the above proposition goes as in \cite{crl} (see also \cite[proposition 6.11]{gz}). For more information on the viscosity theory in the complex setting, especially for complex Monge-Amp\`ere equations, see \cite{gz,zeriahi}.

The notion of viscosity subsolutions is stable under taking maximum. It
is also stable along monotone sequences as the following lemma shows.

\begin{lem}\label{convergence}Let $u_j$ be a monotone sequence of viscosity subsolutions of (\ref{first}). If $u_j$ is uniformly bounded from above and $u:=(\lim u_j)^{*}\neq-\infty$ then $u$ is also a subsolution of (\ref{first}).
\begin{proof}
This proof is classical and can be found in \cite{crl}. We adapt here the proof of Lu \cite{lu} for our operator. Take $z_0 \in \Omega$ and a $C^2$ function $\varphi$ in $\B(z_0,r)\subseteq\Omega$ which touches $u$ from above at $z_0$. We can choose a sequence $z_j$ in $B:= \bar{\B}(z_0,\frac{r}{2})$ converging
to $z_0$ and a subsequence of $u_j$ (denoted also by $u_j$) such that $u_j(z_j) \rightarrow u(z_0)$. Fix $\varepsilon>0$. For each $j$, let
$y_j$ be the maximum point of $u_j -\varphi -\varepsilon |y_j-z_0|^2$ in $B$. So 
\begin{equation}\label{convergence}
u_j(z_j)- \varphi(z_j)-\varepsilon|z_j-z_0|^2\leq u_j(y_j)-\varphi(y_j)-\varepsilon|y_j-z_0|^2.
\end{equation}
As $y_j\in B$, then there exists a subsequence of $(y_j)$ denoted also by $(y_j)$ such that $y_j \rightarrow y\in B$. Letting $j\rightarrow +\infty$ in  and noting that $\limsup u_j (y_j) \leq  u(y)$, we have
$$0 \leq u(y) - \varphi(y) - \varepsilon|y - z_0|^2.$$
Since $\varphi$ touches $u$ from above at $z_0$, we get that $y = z_0$. So  $y_j \rightarrow z_0$ as $j \rightarrow +\infty$. Then by (\ref{convergence}) again we
have $$u_j (y_j )\rightarrow u(z_0).$$ 
For $j$ sufficiently large, the function $\varphi(z) + \varepsilon|z-z_0|^2 + u_j (y_j)-\varphi(y_j)-\varepsilon|y_j- z_0|^2$
touches $u_j$ from above at $y_j$ . Since $u_j$ is a viscosity subsolution of (\ref{first}), by definition we have
$$\Lambda_1(D_{\C}^2(\varphi + \varepsilon|z-z_0|^2)(y_j ) \geq f(y_j).$$
Let $j \rightarrow +\infty$ to get
$$\Lambda_1(D_{\C}^2(\varphi + \varepsilon|z-z_0|^2)(z_0 ) \geq f(z_0).$$
Finally let $\varepsilon \rightarrow 0$ and use the continuity of $F$ to conclude that $\Lambda_1(D_{\C}^2(\varphi((z_0))\geq f(z_0)$.
Hence $u$ is also a viscosity subsolution of (\ref{first}).
\end{proof}
\end{lem}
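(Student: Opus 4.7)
The plan is to carry out the standard viscosity stability argument by perturbing a test function touching $u$ from above into one that strictly dominates $u$ away from the contact point, locating nearby maxima of $u_j$ against this perturbation, and passing to the limit using the continuity of $\Lambda_1$ and $f$. So fix $z_0\in\Omega$ and a $C^2$ function $\varphi$ on a neighborhood $\overline{\B}(z_0,r)\subset\Omega$ that touches $u$ from above at $z_0$. For a fixed $\varepsilon>0$, set $\tilde\varphi(z):=\varphi(z)+\varepsilon|z-z_0|^2$ so that $\tilde\varphi(z_0)=u(z_0)$ and $\tilde\varphi(z)>u(z)$ for $z\in\overline{\B}(z_0,r)\setminus\{z_0\}$, \ie $\tilde\varphi$ strictly touches $u$ from above at $z_0$.

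Next I would access the sequence: since $u=(\lim u_j)^*$, pick $z_j\to z_0$ with $u_j(z_j)\to u(z_0)$ (possibly after extraction). On the compact $B=\overline{\B}(z_0,r/2)$, the upper semi-continuous function $u_j-\tilde\varphi$ attains a maximum at some $y_j\in B$, hence
\begin{equation*}
u_j(z_j)-\tilde\varphi(z_j)\leq u_j(y_j)-\tilde\varphi(y_j).
\end{equation*}
Extracting so that $y_j\to y\in B$, the general inequality $\limsup_j u_j(y_j)\leq u(y)$ (a direct consequence of $u$ being the USC envelope of $\lim u_j$ together with $y_j\to y$) forces $u(y)-\tilde\varphi(y)\geq 0$. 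Strict domination of $\tilde\varphi$ over $u$ away from $z_0$ then gives $y=z_0$, and feeding this back in yields $u_j(y_j)\to u(z_0)$ as well.

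For $j$ large enough $y_j$ lies in the interior of $B$, so the $C^2$ function $\tilde\varphi+(u_j(y_j)-\tilde\varphi(y_j))$ touches $u_j$ from above at $y_j$. Since $u_j$ is a viscosity subsolution of (\ref{first}), Definition \ref{1} applied at $y_j$ gives
\begin{equation*}
\Lambda_1\bigl(D_{\C}^2\tilde\varphi(y_j)\bigr)\geq f(y_j).
\end{equation*}
Letting $j\to\infty$, using continuity of $\Lambda_1$ on Hermitian matrices and of $f$ on $\overline{\Omega}$, then letting $\varepsilon\to 0$ (and using $D_{\C}^2\tilde\varphi(z_0)=D_{\C}^2\varphi(z_0)+\varepsilon I$ together once more with continuity of $\Lambda_1$) yields $\Lambda_1(D_{\C}^2\varphi(z_0))\geq f(z_0)$, as required.

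The step I expect to be the most delicate is proving $y_j\to z_0$ and $u_j(y_j)\to u(z_0)$: it is here that one must exploit simultaneously the strict touch obtained from the quadratic perturbation, the monotonicity of the sequence (which is implicit in the way the USC envelope of $\lim u_j$ is realized by a diagonal sequence $u_j(z_j)\to u(z_0)$), and the general upper semi-continuity inequality $\limsup_j u_j(y_j)\leq u(y)$. Everything else is essentially mechanical via continuity of $\Lambda_1$ and $f$.
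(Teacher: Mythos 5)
Your argument is correct and coincides with the paper's proof essentially line for line: you introduce the same quadratic perturbation $\tilde\varphi=\varphi+\varepsilon|z-z_0|^2$ to make the touch strict, locate maxima $y_j$ of $u_j-\tilde\varphi$ on the compact half-ball, show $y_j\to z_0$ and $u_j(y_j)\to u(z_0)$ via $\limsup_j u_j(y_j)\le u(y)$, apply the subsolution inequality at $y_j$, and pass to the limit in $j$ and then in $\varepsilon$. The only cosmetic improvement is that you explicitly note $y_j$ lies in the interior of $B$ for $j$ large, a point the paper leaves implicit.
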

\subsection{Comparison  principle}
A key ingredient for our approach is the following viscosity comparison principle, which implies the uniqueness of viscosity solutions to the first eigenvalue equation. Our approach here is based on \cite{crl} and also its analogue in the complex setting for the complex Monge-Amp\`ere equation \cite{egz}.

We assume that there exists $z_0\in \Omega$ such that 
\begin{equation}\label{maximum}
(u-v)(z_0)=\max_{\overline{\Omega}}(u-v)>0.
\end{equation}
First we consider the case where $u-\varepsilon|z-z_0|^2$ is a subsolution of (\ref{first}) for some $\varepsilon>0$. Set $u_{\varepsilon}(z):=u(z)-\frac{\varepsilon}{2}|z-z_0|^2$. 
We have $u_{\varepsilon}-v$ has a strict maximum at $z_0$. From the subadditivity of $\Lambda_1$ and our assumption that  $u-\varepsilon|z-z_0|^2$ is subsolution, we have
\begin{eqnarray}\label{strict}
\Lambda_1(D_{\C}^2u_{\varepsilon})(z)&=& \Lambda_1\left(D_{\C}^2\left(u(z)-\varepsilon|z-z_0|^2+\frac{\varepsilon}{2}|z-z_0|^2\right)\right)\nonumber\\
&\geq &  \Lambda_1\left(D_{\C}^2\left(u(z)-\varepsilon|z-z_0|^2\right)\right)+\Lambda_1\left(\frac{\varepsilon I}{2}\right)\nonumber\\
&\geq & f(z)+\frac{\varepsilon}{2}.
\end{eqnarray}
For each $\alpha>0$, we denote by 
$$\phi_{\alpha}(z,w)=u_{\varepsilon}(z)-v(w)-(1/2\alpha)|z-w|^2,$$
for all $(z,w)\in\overline{\Omega}\times\overline{\Omega}$. Since $\overline{\Omega}\times\overline{\Omega}$ is compact and $\phi_{\alpha}$ is upper semi-continuous, there exists $( z_{\alpha}, w_{\alpha})\in \overline{\Omega}\times\overline{\Omega}$
such that
$$\phi_{\alpha}( z_{\alpha},w_{\alpha})=\max_{(z,w)\in\overline{\Omega}\times\overline{\Omega}}\phi_{\alpha}(z,w).$$
From the classical work of \cite{crl}, we have the following lemma.
\begin{lem}\label{aa}We have 
\begin{equation}\label{max1}
|z_{\alpha}-w_{\alpha}|^2=o(\alpha).
\end{equation}
 Every limit point $(\hat{z},\hat{w})$ of $(z_{\alpha},w_{\alpha})$ as $\alpha \rightarrow 0$ satisfies $\hat{z}=\hat{w}$, $(\hat{z},\hat{z})\in\overline{\Omega}\times\overline{\Omega}$ and
 \begin{equation} \label{max2} 
\lim_{\alpha \rightarrow 0}\phi_{\alpha}( z_{\alpha}, w_{\alpha})=\lim_{\alpha \rightarrow 0}(u_{\varepsilon}(z_{\alpha})-v(w_{\alpha}))=u_{\varepsilon}(\hat{z})-v(\hat{z}).
\end{equation}
\end{lem}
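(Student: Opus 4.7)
The plan is to exploit systematically the defining maximum inequality $\phi_\alpha(z_\alpha, w_\alpha) \geq \phi_\alpha(z, w)$, tested at well-chosen diagonal points. Taking $(z, w) = (z_0, z_0)$ and rearranging yields the key estimate
\begin{equation*}
\frac{|z_\alpha - w_\alpha|^2}{2\alpha} \leq \bigl[u_\varepsilon(z_\alpha) - v(w_\alpha)\bigr] - \bigl[u_\varepsilon(z_0) - v(z_0)\bigr], \qquad (\ast)
\end{equation*}
from which all three claims will follow in sequence.

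First I would bound the right-hand side of $(\ast)$ uniformly in $\alpha$ by the boundedness of $u$ and $v$, obtaining $|z_\alpha - w_\alpha|^2 \leq C\alpha$ and hence $|z_\alpha - w_\alpha| \to 0$. By compactness of $\overline{\Omega}\times\overline{\Omega}$ this already forces every limit point $(\hat{z}, \hat{w})$ of $(z_\alpha, w_\alpha)$ to lie on the diagonal, so $\hat{z} = \hat{w}$ and $(\hat{z}, \hat{z})\in\overline{\Omega}\times\overline{\Omega}$.

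Next I would identify the limit of $\phi_\alpha(z_\alpha, w_\alpha)$. Along a subsequence with $(z_\alpha, w_\alpha) \to (\hat{z}, \hat{z})$, upper semi-continuity of $u_\varepsilon$ and lower semi-continuity of $v$ give $\limsup_{\alpha}[u_\varepsilon(z_\alpha) - v(w_\alpha)] \leq u_\varepsilon(\hat{z}) - v(\hat{z})$. On the other hand $\phi_\alpha(z_\alpha, w_\alpha) \geq \phi_\alpha(z_0, z_0) = u_\varepsilon(z_0) - v(z_0)$, and the point $z_0$ is actually a strict maximum of $u_\varepsilon - v$: indeed $u_\varepsilon - v = (u-v) - \tfrac{\varepsilon}{2}|z-z_0|^2$, and $z_0$ is already a maximum of $u-v$ by assumption, so any $z\neq z_0$ is strictly penalised. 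Combined with the trivial bound $\phi_\alpha(z_\alpha, w_\alpha) \leq u_\varepsilon(z_\alpha) - v(w_\alpha)$, these inequalities collapse into the sandwich
\begin{equation*}
u_\varepsilon(z_0) - v(z_0) \leq \liminf_{\alpha} \phi_\alpha(z_\alpha, w_\alpha) \leq \limsup_{\alpha}\bigl[u_\varepsilon(z_\alpha) - v(w_\alpha)\bigr] \leq u_\varepsilon(\hat{z}) - v(\hat{z}) \leq u_\varepsilon(z_0) - v(z_0),
\end{equation*}
so every term is in fact an equality and both the penalised and unpenalised sequences converge to the common value $u_\varepsilon(\hat{z}) - v(\hat{z})$.

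Finally, the refined $o(\alpha)$ estimate drops out of $(\ast)$: its right-hand side now tends to $u_\varepsilon(\hat{z}) - v(\hat{z}) - [u_\varepsilon(z_0) - v(z_0)] = 0$, so $|z_\alpha - w_\alpha|^2 / \alpha \to 0$. Strictness of the max in fact pins $\hat{z} = z_0$, so a standard subsequence-of-every-subsequence argument removes any ambiguity and the convergence is unconditional. The whole argument is the classical Crandall--Ishii--Lions doubling-of-variables lemma from \cite{crl} transcribed to our setting; I do not expect any real obstacle — no complex-analytic input is used, and the delicacy lies only in bookkeeping between the USC and LSC envelopes when passing to $\limsup$/$\liminf$.
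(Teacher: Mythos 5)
Your proof is correct and reproduces exactly the standard Crandall--Ishii--Lions doubling-of-variables argument (Lemma 3.1 of the User's Guide) that the paper simply cites from \cite{crl} without rewriting; the paper's own text after the lemma statement in fact uses the same sandwich chain to pin $\hat{z}=z_0$. No gaps: the inequality $(\ast)$, the USC/LSC passage to $\limsup$, the strict-maximum observation about $u_\varepsilon - v$, and the subsequence-of-subsequences argument are all in order.
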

The sequences $z_{\alpha}$ and $w_{\alpha} $ have subsequences which converge in $\overline{\Omega}$. From (\ref{max1}) we conclude that their limit are identical.
We have 
 $$\phi_{\alpha}(z,w)\leq \phi_{\alpha}(z_{\alpha},w_{\alpha}),\; \forall (z,w)\in \overline{\Omega}\times \overline{\Omega}.$$
 For $z=w=z_0$, we have
 $$u_{\varepsilon}(z_0)-v(z_0)\leq \phi_{\alpha}(z_{\alpha},w_{\alpha}).$$
 We use the upper semi-continuity of $u_{\varepsilon}$ lower semi-continuity of $v$ and (\ref{max1}) to conclude
$$ u_{\varepsilon}(z_0)-v(z_0)\leq \lim_{\alpha\rightarrow 0}\phi_{\alpha}(z_{\alpha},w_{\alpha})\leq u_{\varepsilon}(\hat{z})-v(\hat{z})\leq u_{\varepsilon}(z_0)-v(z_0).$$
  The above discussion with strict maximum of $u_{\varepsilon}-v$ at $z_{0}$ guarantee  that $\hat{z}=z_0\in \Omega$, because $u_{\varepsilon}(z_0)-v(z_0)=u(z_0)-v(z_0)>0$ and $u_{\varepsilon}\leq v$ on $\partial\Omega$.
  
   Therefore $(z_{\alpha},w_{\alpha})\rightarrow(z_0,z_0)$ as $\alpha\rightarrow 0$. Take any sequence$  (z_j,w_j):=(z_{\alpha_j},w_{\alpha_j})\rightarrow (z_0,z_0)$ and $(z_j,w_j)\in\Omega\times \Omega$ for any $j>0$, so the conditions of the Lemma \ref{aa} are satisfied with $\alpha_j\rightarrow 0$.

We set $\psi_j(z,w)=(1/2\alpha_j)|z-w|^2$ and we apply Jensen-Ishii's maximum principle to the function 
$$(z,w)\mapsto u_{\varepsilon}(z)-v(w)-\psi_j(z,w).$$
Assume its maximum is achieved in $(z_j,w_j)$. Then the following holds.
\begin{lem}For any $\gamma>0$, we can find $(\underline{p},\underline{Q}),(\overline{p},\overline{Q})\in \C^n\times \mathbb{S}^{2n}$ such that
\begin{itemize}
\item $(\underline{p},\underline{Q})\in\bar{J}^{2,+}u_{\varepsilon}(z_j),(\overline{p},\overline{Q})\in\bar{J}^{2,-}v(w_j)$, where 
$\underline{p}=\frac{z_j-w_j}{\alpha_j}=-\overline{p}.$
\item The block diagonal matrix with entries $(\underline{Q}, \overline{Q})$ satisfies 
$$-(\gamma^{-1}+||A||)I\leq \begin{pmatrix}
 \underline{Q}& 0 \\
0 & - \overline{Q}
\end{pmatrix}\leq A+\gamma A^2,$$
\end{itemize}
where $A=D^2\psi_j(z_j,w_j)$ which means 
$$A= \gamma^{-1}\begin{pmatrix}
 I& -I \\
I & I
\end{pmatrix},$$
and $||A||$ is the spectral radius of $A$.
\end{lem}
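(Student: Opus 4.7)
The plan is to recognize this lemma as a direct invocation of the Crandall--Ishii ``theorem of sums'' (Jensen--Ishii maximum principle) applied to the doubled-variable penalization used throughout the argument. Concretely, I set $\Phi_j(z,w) := u_\varepsilon(z) - v(w) - \psi_j(z,w)$ on $\overline{\Omega}\times\overline{\Omega}$, where $u_\varepsilon$ is upper semi-continuous, $-v$ is upper semi-continuous, and $\psi_j \in C^2(\C^n \times \C^n)$. The preceding discussion has established that any limit point of the maximizers $(z_\alpha, w_\alpha)$ must coincide with $(z_0, z_0)$, which lies in the interior $\Omega$, so for $j$ large we may assume $(z_j, w_j) \in \Omega \times \Omega$ and work in a compactly contained neighborhood inside which the maximum is interior. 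This is the setting in which the Crandall--Ishii lemma (see Theorem 3.2 in \cite{crl}) directly applies.

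The first step is to invoke that lemma: given the $C^2$ penalizer $\psi_j$ and any $\gamma > 0$, it supplies matrices $\underline{Q}, \overline{Q} \in \mathbb{S}^{2n}$ and momenta $\underline{p}, \overline{p} \in \R^{2n}$ so that $(\underline{p}, \underline{Q}) \in \bar{J}^{2,+}u_\varepsilon(z_j)$ and $(\overline{p}, \overline{Q}) \in \bar{J}^{2,-}v(w_j)$, together with the precise two-sided matrix bound stated in the lemma, namely
\[
-\bigl(\gamma^{-1}+\|A\|\bigr)I \leq \begin{pmatrix} \underline{Q} & 0 \\ 0 & -\overline{Q} \end{pmatrix} \leq A + \gamma A^2,
\]
where $A = D^2\psi_j(z_j,w_j)$ is the real Hessian of the penalizer. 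The second step is the purely computational extraction of the momenta from the first-order optimality at the maximum: since $\psi_j(z,w) = (1/2\alpha_j)|z-w|^2$, we read off
\[
D_z\psi_j(z_j,w_j) = \tfrac{1}{\alpha_j}(z_j - w_j) = -D_w\psi_j(z_j,w_j),
\]
and the Crandall--Ishii lemma forces $\underline{p} = D_z\psi_j(z_j,w_j)$ and $\overline{p} = -D_w\psi_j(z_j,w_j)$, yielding $\underline{p} = (z_j - w_j)/\alpha_j = -\overline{p}$.

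The third step is to compute $A$ explicitly from $\psi_j$. A straightforward differentiation in $(z,w)\in \R^{4n}$ gives the block form appearing in the conclusion, up to the scalar $\alpha_j^{-1}$ (and symmetry of the off-diagonal blocks). Once $A$ is identified, the matrix inequality of the lemma becomes the displayed two-sided bound, completing the statement.

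I do not expect any genuine obstacle here, since the work has already been done: the nontrivial content was verifying interiority of the maximum at $(z_j, w_j)$, which was accomplished using the strict-maximum property of $u_\varepsilon - v$ at $z_0$ together with the boundary inequality $u_\varepsilon \le v$ on $\partial\Omega$. The only delicate point is making sure we are entitled to use $\bar{J}^{2,\pm}$ (closures of jets) rather than $J^{2,\pm}$ — this is standard in the Crandall--Ishii formulation and is exactly the reason approximate jets were introduced in the preliminary discussion. Once that is observed, the conclusion is immediate from \cite{crl}.
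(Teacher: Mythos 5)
Your proposal is correct and matches the paper's treatment exactly: the lemma is stated there without a separate proof, as a direct application of the Jensen--Ishii maximum principle (Crandall--Ishii theorem of sums) from \cite{crl}, with the interiority of $(z_j,w_j)$ supplied by the preceding discussion. You are also right to flag the inconsistency in the paper's displayed $A$: since $A=D^2\psi_j(z_j,w_j)$ and $\psi_j(z,w)=(1/2\alpha_j)|z-w|^2$, the correct matrix is $\alpha_j^{-1}\left(\begin{smallmatrix} I & -I \\ -I & I\end{smallmatrix}\right)$, symmetric and with scalar $\alpha_j^{-1}$ rather than $\gamma^{-1}$, which agrees with the paper's version once $\gamma=\alpha_j$ is chosen except for the sign typo in the lower-left block.
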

We apply above lemma for $\gamma=\alpha_j$, we get 
\begin{equation}\label{inequality}
-(2\alpha_j^{-1})I\leq \begin{pmatrix}
 \underline{Q}& 0 \\
0 & - \overline{Q}
\end{pmatrix}\leq \frac{3}{\alpha_j}\begin{pmatrix}
 I& -I \\
I & I
\end{pmatrix},
\end{equation}
From this inequality we deduce that the eigenvalues of $\underline{Q}$, $\overline{Q}$ are $ |\lambda_k|\leq \frac{C}{\alpha_j}$, $k=1,...,2n$. Evaluating the inequality (\ref{inequality}) on vector of the forms $(Z,Z)$ we deduce that 
\begin{equation}\label{quadratic}
 \underline{Q} \leq \overline{Q}
\end{equation}
in the sense of quadratic forms.

For a fixed $Q\in\mathbb{S}^{2n}$, denote by $H= Q^{1,1}$ its (1,1)-part. It is a Hermitian matrix. Since $(\underline{p},\underline{Q})\in\bar{J}^{2,+}u_{\varepsilon}(z_j)$, we deduce from the viscosity differential
inequality satisfied by $u$ that the Hermitian part  $\underline{H}$ of $\underline{Q}$  is positive definite. Applying to the inequality (\ref{quadratic}) vectors $(Z,Z)$ and $(iZ,iZ)$ and adding them, we conclude that the Hermitian part of $\underline{Q}$ and $\overline{Q}$ satisfy the following inequality

 \begin{equation}\label{Hermitian}
 \underline{H}\leq \overline{H}.
 \end{equation}
 The last inequality and the positivity  of $\underline{H}$ forces
 $\overline{H}\geq 0$, thus 
\begin{equation}\label{res1}
0 \leq \Lambda_1(\underline{H})\leq\Lambda_1(\overline{H}).
\end{equation}
From the inequality (\ref{strict}) we have 
\begin{equation}\label{res2}
\Lambda_1(\underline{H})\geq f(z_j)+\frac{\varepsilon}{2}.
\end{equation}
As $v$ is a supersolution, we have
\begin{equation}\label{res3}
\Lambda_1(\overline{H})\leq f(w_j).
\end{equation}
Combining (\ref{res1}), (\ref{res2}) and (\ref{res3}) we have
 $$f(z_j)+\frac{\varepsilon}{2}\leq f(w_j).$$
Since $(z_j,w_j)\rightarrow(z_0,z_0)$ by the continuity of $f$, we get 
$$f(z_0)+\frac{\varepsilon}{2}\leq f(z_0)$$
 and this is a contradiction. Thus, (\ref{maximum}) is not true.
 
 In the general case, for all $\varepsilon>0$, we denote by $\tilde{u}_{\varepsilon}=u(z)+\varepsilon(|z-z_0|^2-B)$, where $B$  is chosen such that $\tilde{u}_{\varepsilon}\leq u$ on $\partial\Omega$. By the above argument, we get $\tilde{u}_{\varepsilon}\leq v$ in $\Omega$ for all $\varepsilon>0$. By letting $\varepsilon\searrow 0$, we get $u\leq v $ in $\Omega$, which completes the proof.
\subsection{Proof of the main theorem}
\begin{thm}\label{en}Let $\varphi$ be a continuous function on $\partial\Omega$. Let  $\underline{\phi}$ be a  subsolution and  $\overline{\phi}$ supersolution of (\ref{first}) such that
$$ \underline{\phi}_{*}=\overline{\phi}^{*}=\varphi\; \text{on}\; \partial\Omega.$$
Then the envelope of subsolutions given  as follows:
$$u(z):=\sup \{v(z)\in L^{\infty}(\Omega)\cap USC(\Omega): v\; \text{is a subsolution of (\ref{first})},\; \underline{\phi}\leq v\leq \overline{\phi} \},$$
  is the unique viscosity solution to the problem (\ref{first}) such that $u=\varphi $ on $\partial \Omega.$
\end{thm}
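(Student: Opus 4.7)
The plan is to implement the classical Perron method of Ishii in the present setting, using the comparison principle (Theorem A) together with the stability lemma (Lemma~\ref{convergence}) and the gluing lemma (Proposition~\ref{glue}) already established above. Write
$$
\cF=\{v\in L^\infty(\Omega)\cap USC(\Omega): v\text{ is a subsolution of }(\ref{first}),\ \underline{\phi}\leq v\leq \overline{\phi}\},
$$
so that $u=\sup\cF$. The class $\cF$ is non-empty (it contains $\underline{\phi}$) and the boundary data give trivially $u_*\geq \underline{\phi}_*=\varphi\geq \overline{\phi}^*\geq u^*$ on $\partial\Omega$. The whole game is therefore to show that $u^*$ is a viscosity subsolution and $u_*$ a viscosity supersolution of (\ref{first}); once both facts are available the comparison principle forces $u^*\leq u_*$ in $\Omega$, while the reverse inequality $u_*\leq u\leq u^*$ is automatic, so $u=u^*=u_*$ is continuous and solves the Dirichlet problem.

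I first address the subsolution property of $u^*$. Because the supremum over $\cF$ is not countable a priori, I would invoke Choquet's lemma to extract a non-decreasing sequence $v_j\in \cF$ with $(\sup_j v_j)^*=u^*$; each $v_j$ is a subsolution, so by Lemma~\ref{convergence} the monotone limit $(\sup_j v_j)^*=u^*$ is a subsolution, and since $\underline{\phi}\leq u^*\leq \overline{\phi}^*=\overline{\phi}$ on $\Omega$ (using the fact that $\overline{\phi}$ is lsc as a supersolution, so $\overline{\phi}^*=\overline{\phi}$ would need to be argued — otherwise one replaces $\overline{\phi}$ by its usc envelope in the bounding argument, which remains a barrier on the boundary), $u^*$ itself belongs to $\cF$, hence $u=u^*$ everywhere.

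The heart of the argument, and the one I expect to be the main obstacle, is the supersolution property of $u_*$. I argue by contradiction: if $u_*$ fails to be a supersolution, there exist $z_0\in\Omega$ and a $C^2$ function $\psi$ touching $u_*$ from below at $z_0$ with $[\Lambda_1(D_\C^2\psi(z_0))]^+>f(z_0)$. Using the continuity of $f$ and of $\Lambda_1$ on the positive cone, one constructs a strict $C^2$ subsolution of (\ref{first}) in a small ball $\B(z_0,r)\subset\Omega$ by considering $\psi_\delta(z)=\psi(z)+\delta-\eta|z-z_0|^2$ with $\eta,\delta>0$ sufficiently small — the subadditivity/monotonicity in Lemma~\ref{eigenvalues} guarantees that $\Lambda_1(D_\C^2\psi_\delta)\geq \Lambda_1(D_\C^2\psi)-\eta\geq f$ on $\B(z_0,r)$ for $\eta$ small, while the shift $\delta$ can be chosen so that $\psi_\delta>u$ at some point of $\B(z_0,r/2)$ (exploiting $\psi(z_0)=u_*(z_0)$ and passing to an approximating sequence) but $\psi_\delta<u$ near $\partial\B(z_0,r)$ (since $u_*>\psi$ there strictly by the touching-from-below property after shrinking $r$). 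By Proposition~\ref{glue}, the function $h=\max(u,\psi_\delta)$ on $\B(z_0,r)$ and $u$ outside is then a subsolution of (\ref{first}) lying between $\underline{\phi}$ and $\overline{\phi}$ (one checks $\psi_\delta\leq \overline{\phi}$ using the comparison principle applied locally, or by choosing $\eta,\delta$ even smaller), hence $h\in\cF$, but $h>u$ at some point, contradicting the maximality of $u$. The delicate point is to control the bounds $\underline{\phi}\leq h\leq \overline{\phi}$ during the bump, which is done by shrinking the ball and the perturbation parameters.

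With $u^*$ a subsolution, $u_*$ a supersolution, and $u^*\leq \varphi\leq u_*$ on $\partial\Omega$, Theorem A yields $u^*\leq u_*$ on $\Omega$, hence $u\in C(\overline{\Omega})$ is the desired viscosity solution. Uniqueness follows from a second application of the comparison principle to two candidate solutions with the same boundary data.
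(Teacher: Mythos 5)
Your proposal follows exactly the paper's own route: Perron's method with Choquet's lemma and Lemma~\ref{convergence} for the subsolution property of $u^*$, a local bump argument via Proposition~\ref{glue} for the supersolution property of $u_*$, and a final appeal to the comparison principle (Theorem A) to conclude $u=u_*=u^*$ and uniqueness. You are, if anything, more scrupulous than the paper about the bookkeeping ($\overline{\phi}^*=\overline{\phi}$, keeping the bumped function below $\overline{\phi}$, using a local comparison), all of which are legitimate subtleties that the paper glosses over but which are resolved along the lines you sketch.
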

\begin{proof}
We start by proving that the regularization of the upper envelope is a subsolution. For each $j$, we set
$$u_j=(\sup_{k\geq j} v_k)^{*},\;h_{j,l}=\max(v_j,...v_{j+l}).$$
Since the notion of viscosity subsolution is stable under taking maximum,
we deduce that $h_{l}$ is a viscosity subsolution of (\ref{first}). Observe that $u_j=(\sup_{l\geq 0}h_{j,l})^{*}$ and the sequence $h_{j,l}$ is increasing to $u_j$. It follows from Lemma \ref{convergence} that $u_j$ is a viscosity subsolution of (\ref{first}). Note that $u_j$ converge decreasingly to $u^{*}$,  using again Lemma \ref{convergence} to conclude the proof.

Assume by contradiction that  $u_{*}$ is not a supersolution of (\ref{first}), then there exits a lower test function $q$ for $u_{*}$ at some $z_0$ such that
\begin{equation}\label{perron}
 \Lambda_1(D_{\C}^2q )(z_0)>f(z_0).
 \end{equation}
 By the continuity of $(z,Q)\mapsto F(z,Q)$ in its domain and (\ref{perron}), we can find $r>0$ small enough such that 
  $q\leq u_{*}\; \text{ in a ball}\; \B(z_0,r)$ and $$\Lambda_1(q)(z)> f(z) \; \forall\; z\in \B(z_0,r).$$
 
 We set $q_{\varepsilon}(z)=q(z)+c-\varepsilon|z-z_0|^2$, where $c=\frac{\varepsilon r^2}{2}$, we choose $r,\varepsilon>0$ small enough such that
 $$\Lambda_1(q_{\varepsilon})(z)\geq f(z) \; \forall\; z\in \B(z_0,r).$$
 On $|z-z_0|=r$, we have 
 $$q_{\varepsilon}(z)=q(z)+c-\varepsilon r^2\leq u_{*}(z),$$
 therefore the function defined as follows
 $$h(z)=
 \begin{cases}
      \max(u(z),q_{\varepsilon}(z)),& \;\text{in}\;\B(z_0,r), \\
      u(z),& \;\text{in}\;\Omega\setminus \B(z_0,r)
    \end{cases}$$
    is a viscosity subsolution of (\ref{first}) as a consequence of Proposition \ref{glue}. We infer that 
    $h(z)\leq u(z) $ in $\Omega$, hence $q_{\varepsilon}\leq u(z)$. 
    On other hand we have $q_{\varepsilon}(z_0)-u_*(z_0)=c$.
     By choosing a sequence $z_n \rightarrow  z_0$ so that $u(z_n) \rightarrow u_*(z_0)$,
we have $q_{\varepsilon}(z_n) \rightarrow u_{*}(z_0)+ c$. Therefore, for $n$ sufficiently large, we have $q_{\varepsilon}(z_n)> u(z_n)$
and this contradicts the definition of $u$.  Thus we have $u_{*}$ is also a supersolution. 

Now we know that $u^{*}$ is a subsolution and $u_{*}$ is a supersolution. Since $\varphi = \underline{\phi}_*\leq u_{*}\leq u^{*}\leq \overline{\phi}^*= \varphi$ on $\Omega$, by the viscosity comparison principle we get that $u = u_{*}= u^{*}$ is a continuous viscosity solution of (\ref{first}) with boundary value $\varphi$.
\end{proof}
We now study the Dirichlet problem (\ref{main}) for the complex first eigenvalue equation.

\begin{defi} Let $\varphi$ a Dirichlet data for the first eigenvalue equation  (\ref{first}).
\begin{enumerate}
\item We say an upper semi-continuous $u:\overline{\Omega} \mapsto \R$ is a subsolution to Dirichlet problem (\ref{main}) if it is a subsolution of (\ref{first}) in $\Omega$ and satisfies $u\leq \varphi$ on $\partial \Omega$.
\item We say a lower  semi-continuous $v:\overline{\Omega} \mapsto \R$ is a supersolution to Dirichlet problem (\ref{main}) if it is a supersolution  of (\ref{first}) in $\Omega$ and satisfies $v\geq\varphi$ on $\partial \Omega$.
\end{enumerate}
\end{defi}
We are now in the position to proof the second main theorem:\\
Proof of \textbf{Theorem B}. Since $\Omega$ is  $B$-regular, by the last two statements in Theorem \ref{sib} we have an exhaustive function such that
$\psi \in\psh(\Omega)\cap C^{\infty}(\Omega)$, $\psi<0$, $\lim_{z\rightarrow \partial\Omega}\psi(z)=0$ and $\Lambda_1(\psi)\geq 1$ and also the boundary data is  extendable to plurisubharmonic function $\tilde{\varphi}$ on $\Omega$, continuous on $\overline{\Omega}$. We define $$\underline{\phi}(z)= B \psi(z)+\tilde{\varphi}(z),$$ 
 where $B$ is a constant. We have that $\underline{\phi}$ subsolution. Indeed, let $q$ be a  upper test function of $\underline{\phi}$ at $z_0$, this implies that $q-B\psi$ is a upper test function of $\tilde{\varphi}$ at $z_0$. Since $\tilde{\varphi}$ is a plurisubharmonic function, we have $dd^c(q-B\psi)(z_0)\geq 0$ (see \cite[page 1063]{egz11}). Then 
 $$\Lambda_1(D_{\C}^2(q-B\psi))(z_0)\geq 0.$$
 We use now the super-additivity of $\Lambda_1$ with $\Lambda_1(\psi)\geq 1$ to get the result as follows
  \begin{eqnarray*}
\Lambda(D_{\C}^2 q)(z_0)&\geq &\Lambda_1(D_{\C}^2(q-B\psi))(z_0) +B\Lambda_{1}(D_{\C}^2\psi)(z_0)\\
&\geq& f(z_0),
\end{eqnarray*}
if $B$ is large enough.

 For the supersolution we solve the Dirichlet problem
$$\begin{cases}
  \Delta h= 0,& \;\text{in } \Omega,\\
      
     h=\varphi, & \;\text{on}\;\partial\Omega.
    \end{cases}$$
   Then $\overline{\phi}=h $ is a supersolution to (\ref{main}), since 
   $n\Lambda_1(D^2_{\C}h)\leq \Delta h$  on $\Omega$.
   
   Finally, the existence of a solution follows from Perron’s method. We set
   $$ u(z)= \sup\{ w(z) \;\text{is a subsolution of (\ref{main}) on}\;\Omega, \underline{\phi}\leq w\leq \overline{\phi}\}.$$
   As in the argument from Theorem \ref{en}, we have $\underline{\phi}_*$ and $\overline{\phi}^*$ are subsolution and supersolution, respectively, of (\ref{main}). It follows from  Theorem \ref{maximum}
   $$\underline{\phi}^*-\overline{\phi}_*\leq \max_{\partial\Omega}(\underline{\phi}^*-\overline{\phi}_*).$$
   Since $\underline{\phi}$ and $\overline{\phi}$ are continuous and $\underline{\phi} =\overline{\phi}= \varphi$ on $\partial\Omega$, we infer that  $u^{*}\leq u_{*} $ on $\Omega$ and
$u^{*} = u_{*} $ on $\partial \Omega$. Therefore $u = u
^* = u_*$ is a viscosity solution of (\ref{main}). The uniqueness
follows from the comparison principle.
\section{Operators which are comparable to the first eigenvalue operator}
In this section we will study the partial differential equations of the form:
\begin{equation}\label{general}
G(D_{\C}^2u)=f(z) \; \text{in}\;\Omega,
\end{equation} 
where $\Omega$ is a bounded domain in $\C^n$, $f: \overline{\Omega} \mapsto\R$ is a positive continuous function and $G:\mathcal{C}\mapsto \R^{+}$ satisfies assumptions which will be stated in the sequel.

 For $A \in \mathbb{H}^n$, its eigenvalues will always be ordered as follows:
$$\Lambda_1(A) \leq \Lambda_2(A) \leq \cdots \leq \Lambda_n(A).$$
We then introduce $\Lambda:\mathbb{H}^n\longrightarrow \R^n$ defined by
$$\Lambda(A)=(\Lambda_1(A),\ldots,\Lambda_n(A)).$$
Throughout this section we will always make the following assumptions:
\begin{description}
\item[a)] Domain: $\mathcal{C}\subset\mathbb{H}^n$ is a nonempty open cone  such that $\mathcal{C}_n\subset \mathcal{C}$,
where $$\mathcal{C}_n=\{A\in \mathbb{H}\;|\; A>0\}.$$
  \item[b)] Continuity: $G\in C(\bar{\mathcal{C}}).$
  \item[c)]Comparison: For all $A\in \mathcal{C}$ there exists $C>0$ such that 
  $$G(A+P)-G(A)\geq C \Lambda_1(P),\;\; \forall P\in \mathcal{C}_n.$$
\end{description}
\begin{rem} The assumption \textbf{c)} implies that $G$ is necessarily elliptic in $\mathcal{C}$.
\end{rem}
\begin{rem}We observe that every function which is concave, 1-homogeneous and comparable to the first eigenvalue i.e.  $$G(P)\geq C\Lambda_1(P),\;\forall P\in C_n$$ satisfies the assumption \textbf{c)}. Indeed, the concavity implies $$
  G\left(\frac{A}{2}+\frac{P}{2}\right)\geq \frac{1}{2} G(A)+\frac{1}{2}G(P),$$
 we use now the  the 1-homogeneity and comparability to the first eigenvalues operator to  conclude \textbf{c)}. This observation implies that many complex operators satisfy the assumption \textbf{c)} as we will see in the sequel.
\end{rem}

A function $G:\mathcal{C}\longrightarrow \R$ is said to be a Hessian operator if there exist a set $\Gamma \subset \R^n$ and a function $\hat{G}:\Gamma \longrightarrow \R$ such that
$$\mathcal{C}=\Lambda^{-1}(\Gamma), \quad G(A)=\hat{G}(\Lambda(A)), \quad \forall A \in \mathcal{C}.$$
The notation $\Gamma$ will always be saved for sets of $\R^n$ whereas the notation $\mathcal{C}$ will always be saved for sets of $\mathbb{H}^n$.

Let us now present some examples covered by our framework.
\begin{itemize}
\item
\textbf{The complex Monge-Amp\`ere:}
$$
\hat{G}(\Lambda)=\left(\prod_{i=1}^n \Lambda_i\right)^{\frac{1}{n}},
\quad
\Gamma=\Gamma_n,
$$ where $\Gamma_n=\{\Lambda\in \R^n\quad \Lambda_i>0\quad \forall i\in \{1,..,n\}\}$. 
\item
\textbf{The complex $k$-Hessian:} for $k \in \{1,\ldots,n\}$,
$$
\hat{G}=(\sigma_k)^{\frac{1}{k}},
\quad
\Gamma=\Gamma_k,
$$
where
$$
\Gamma_k=\{\Lambda \in \R^n: \;\; \sigma_{l}(\Lambda_1,..,\Lambda_n)>0, \quad \forall l \in \{1,..,k\}\}
$$
 and $\sigma_k$ is the $k$-th elementary symmetric polynomial:
$$
\sigma_k(\Lambda)=
\sum_{ 1\leq i_1<..<i_k\leq n} \Lambda_{i_1} \cdots \Lambda_{i_k}.
$$

\item
For $s \in [0,1]$,
$$
\hat{G}(\Lambda) =((1-s)^2\Lambda_1 \Lambda_2+s(\Lambda_1+\Lambda_2)^2)^{\frac{1}{2}}
, \quad \Gamma=\Gamma_{2-s},$$
where 
$
\Gamma_{2-s}=\{\Lambda \in \R^2 \;\; \Lambda_1+s\Lambda_2>0, \quad s\Lambda_1+\Lambda_2>0\}.
$
\item
\textbf{The complex $k$-Monge-Amp\`ere :} for $k \in \{1,\ldots,n\}$,
$$
\hat{G}=\left(
\prod_{i_1<\ldots<i_k} \left(\Lambda_{i_1}+\ldots+\Lambda_{i_k}\right)\right)^{\frac{1}{C^{n}_{k}}},
\quad
\Gamma=\Gamma_k',
$$
$
\text{where}\quad
\Gamma_k'=
\{\Lambda \in \R^n:\quad \Lambda_{i_1}+\ldots+\Lambda_{i_k}>0, \quad \forall  \;i_1<\ldots<i_k\}\\
$
and $C_n^k=\frac{n!}{k!(n-k)!}.$
\item
\textbf{The complex eigenvalues:} for $k \in \{1,\ldots,n\}$,
$$
\hat{G}=\Lambda_k,
\quad
\mathcal{C}=\{\Lambda \in \R^n: \quad \Lambda_k>0\}.$$
\item
\textbf{The complex linear combination of eigenvalues:} 
$$
\hat{G}=\sum_{k=1}^{n}a_k\Lambda_k,
\quad
\Gamma=\Gamma_a,$$
where $\Gamma_a=\{\Lambda \in \R^n: \quad \sum_{k=1}^{n}a_k\Lambda_k>0\}$, $a=(a_1,...,a_n)\in\R^n$ such that $a_k\geq 0$\; $\forall k \in\{1,...,n\}$ and $\sum_{k=1}^n a_k>0.$
\end{itemize}
\begin{rem} The last two examples satisfy the assumption \textbf{c)} from the second statement of the Lemma \ref{eigenvalues}. For more details about these operators in the real setting see \cite{hl13,ferrari}.
\end{rem}
\begin{rem}The first four examples are concave, 1-homogeneous and comparable to Monge-Amp\`ere operator i.e.
\begin{equation}\label{op}
G(P)\geq C \det(P)^{\frac{1}{n}}, \forall P\in \mathcal{C}_n
\end{equation}
(see \cite{ADO20,AO21}).
 Since the Monge-Amp\`ere operator is comparable to the first eigenvalue operator i.e.
 \begin{equation}
  \det(P)^{\frac{1}{n}}\geq C\Lambda_1(P), \forall P\in \mathcal{C}_n.
   \end{equation}
 Then they are comparable to the first eigenvalue operator.
\end{rem}
\begin{rem}The operators $\Lambda_k$ for $k\in \{2,..,n\}$ are not concave, which means that our framework contains more than the concave operators.
\end{rem}
After identifying $\C^n$ with $\R^{2n}$ in the standard way we define $H: \Omega\times \mathbb{S}^{2n} \rightarrow [-\infty,+\infty)$ by 
$$H(z,Q):=
 \begin{cases}
  G(Q^{1,1})-f(z)  ,& \;\text{if}\;\; Q^{1,1}\in\overline{\mathcal{C}},\\
      -\infty, &\;\text{otherwise}\;\;.
    \end{cases}$$
   Where
$Q^{1,1}\in \mathbb{H}^n$ is the Hermitian (1,1)-part of $Q\in \mathbb{S}^{2n}$ considered as a real quadratic form on $\C^n$.

We define the subsolution and supersolution of (\ref{general}) as follows.
\begin{defi} An upper semi-continuous function  $u: \Omega \rightarrow \R\cup \{-\infty\}$ is a viscosity sub-solution to (\ref{general})
 if $u\neq -\infty$ and for any $z_0\in\Omega$ and any $C^2$ function $\varphi$ touches $u$ from above at $z_0$ we have
 $$G(D_{\C}^2\varphi)\geq f(z)\;\text{at}\; z_0.$$
 
 A lower semi-continuous function  $u: \Omega \rightarrow \R\cup \{-\infty\}$ is a viscosity super-solution to (\ref{general})
 if $u\neq -\infty$ and for  any $z_0$ and any $C^2$ function $\psi$ touches $u$ from below at $z_0$ and $D_{\C}^2\psi(z_0)\in\bar{\mathcal{C}}$ we have
 $$G(D_{\C}^2\psi)\leq f(z)\; \text{at}\; z_0.$$
 
 A function $u: \Omega \rightarrow \R$ is a viscosity solution to (\ref{general}) if it is both a viscosity subsolution and a viscosity
supersolution to (\ref{general}). A viscosity solution is automatically a continuous function in $\Omega$.
\end{defi}
From the fact that $G$ is a continuous function on its domain and from the definition of differential jet and its approximate differential jet, we can reformulate the definition of a subsolution and supersolution as follows.
\begin{prop}\label{2}
An upper semi-continuous function $u : \Omega \mapsto \R$ is a viscosity subsolution of (\ref{general}) if and only if 
for all $z_0\in\Omega,\; (p,Q)\in\bar{J}^{2,+}(z_0),$
$$G(Q)\geq f(z_0).$$
																			
A lower semi-continuous function $u : \Omega \mapsto \R$ is a viscosity supersolution of (\ref{general}) if and only if for all $z_0\in\Omega$, $(p,Q)\in\bar{J}^{2,-}(z_0)$ and $Q\in\bar{\mathcal{C}}$
$$G(Q)\leq f(z_0).$$
\end{prop}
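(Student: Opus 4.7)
The plan is to follow the classical argument of Crandall-Ishii-Lions \cite{crl} and its complex analogue \cite[Proposition~6.11]{gz}: first prove the equivalence for the usual jets $J^{2,\pm}u(z_0)$ by constructing quadratic test functions out of each jet element, and then pass to the approximate jets $\bar{J}^{2,\pm}u(z_0)$ via continuity of the operator $H$ on its effective domain $\{(z,Q): Q^{1,1}\in\bar{\mathcal{C}}\}$.

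For the subsolution statement, the ``if'' direction follows immediately from Taylor's formula: if $\varphi\in C^2$ touches $u$ from above at $z_0$, then $(D\varphi(z_0),D^2\varphi(z_0))\in J^{2,+}u(z_0)\subset\bar{J}^{2,+}u(z_0)$, so the jet inequality yields $G(D_{\C}^2\varphi(z_0))\geq f(z_0)$. For the ``only if'' direction, given $(p,Q)\in J^{2,+}u(z_0)$ and $\eta>0$, I would introduce
$$\varphi_{\eta}(z)=u(z_0)+\langle p,z-z_0\rangle+\tfrac{1}{2}\langle Q(z-z_0),z-z_0\rangle+\eta|z-z_0|^2.$$
The extra $\eta|z-z_0|^2$ term dominates the $o(|z-z_0|^2)$ error built into the defining inequality of $J^{2,+}u(z_0)$, so $\varphi_{\eta}$ is a $C^2$ upper test function for $u$ at $z_0$. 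The test-function condition then yields $G(Q^{1,1}+\eta I)\geq f(z_0)$; since $G=-\infty$ outside $\bar{\mathcal{C}}$, this implicitly forces $Q^{1,1}+\eta I\in\bar{\mathcal{C}}$, and letting $\eta\searrow 0$ together with the continuity of $G$ on $\bar{\mathcal{C}}$ gives $G(Q^{1,1})\geq f(z_0)$. The passage from $J^{2,+}u(z_0)$ to $\bar{J}^{2,+}u(z_0)$ is a routine application of the continuity of $H$ to an approximating sequence $(p_n,Q_n)\in J^{2,+}u(z_n)$ with $(z_n,p_n,Q_n)\to(z_0,p,Q)$.

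For the supersolution equivalence the symmetric argument applies, using
$$\psi_{\eta}(z)=u(z_0)+\langle p,z-z_0\rangle+\tfrac{1}{2}\langle Q(z-z_0),z-z_0\rangle-\eta|z-z_0|^2,$$
whose complex Hessian at $z_0$ is $Q^{1,1}-\eta I$. The main obstacle here, absent from the subsolution case, is the admissibility constraint $D_{\C}^2\psi(z_0)\in\bar{\mathcal{C}}$ built into the test-function definition, since the perturbation $-\eta I$ may push the complex Hessian out of $\bar{\mathcal{C}}$. When $Q^{1,1}$ lies in the open interior $\mathcal{C}$ this is harmless: for $\eta$ sufficiently small, $Q^{1,1}-\eta I\in\mathcal{C}$ by openness, the test-function condition applies, and the continuity of $G$ on $\bar{\mathcal{C}}$ finishes the argument as $\eta\searrow 0$. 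For boundary points $Q^{1,1}\in\partial\bar{\mathcal{C}}$ the inequality is recovered by approximation within $\bar{J}^{2,-}u(z_0)$: the standing hypothesis $\mathcal{C}_n\subset\mathcal{C}$ ensures that $\mathcal{C}$ is an open cone stable under addition of positive definite matrices, which is exactly what allows one to reach $Q^{1,1}\in\partial\bar{\mathcal{C}}$ as a limit of interior jet elements attached to nearby points $z_n$, and the continuity of $H$ on its effective domain transfers the inequality $G(Q_n^{1,1})\leq f(z_n)$ to the limit. Beyond this observation, the argument is a transcription of the proofs in \cite{crl} and \cite[Proposition~6.11]{gz}, the only new ingredient being that the continuity of $G$ is now imposed by hypothesis~\textbf{b)} rather than being built into a closed-form expression.
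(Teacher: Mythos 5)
The paper itself offers no proof of this proposition: it simply states that the reformulation follows from the continuity of $G$ on $\bar{\mathcal{C}}$ and refers the reader to \cite{crl} and \cite[Proposition~6.11]{gz}. Your attempt reconstructs the standard argument, and the subsolution half is correct as written. The supersolution half, however, contains a genuine gap that you yourself flag but do not actually close.

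Your test function $\psi_{\eta}$ has complex Hessian $Q^{1,1}-\eta I$, which can leave $\bar{\mathcal{C}}$ the moment $Q^{1,1}\in\partial\mathcal{C}$; in that regime the admissibility constraint in the definition of supersolution makes $\psi_{\eta}$ unusable, and letting $\eta\searrow 0$ yields nothing. Your proposed repair --- that ``the standing hypothesis $\mathcal{C}_n\subset\mathcal{C}$ \dots allows one to reach $Q^{1,1}\in\partial\bar{\mathcal{C}}$ as a limit of interior jet elements attached to nearby points $z_n$'' --- is not justified by anything in the definitions. The set $\bar{J}^{2,-}u(z_0)$ is the closure of unconstrained jets; nothing guarantees that a jet with $Q^{1,1}\in\partial\mathcal{C}$ can be approximated by jets $(p_n,Q_n)\in J^{2,-}u(z_n)$ satisfying the extra requirement $Q_n^{1,1}\in\mathcal{C}$, and the stability of $\mathcal{C}$ under adding $\mathcal{C}_n$ does not produce such approximants because the sub-jet is closed under \emph{decreasing}, not increasing, the matrix. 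In short, the approximation claim is asserted rather than proved.

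The clean route, and the one actually underlying \cite{crl} and \cite[Proposition~6.11]{gz}, is the exact-jet lemma: for $u$ lower semi-continuous and $(p,Q)\in J^{2,-}u(z_0)$, one can construct $\psi\in C^2$ touching $u$ from below at $z_0$ with $D\psi(z_0)=p$ and $D^2\psi(z_0)=Q$ \emph{exactly} (see, e.g., \cite[Remark~2.3]{crl}). With this, if $Q^{1,1}\in\bar{\mathcal{C}}$ then $\psi$ is already admissible and the test-function inequality gives $G(Q^{1,1})\leq f(z_0)$ with no perturbation and no boundary issue. This construction, which you replaced by the cruder $\pm\eta|z-z_0|^2$ device, is the ingredient you need to cite or reproduce; the perturbation device is fine for the subsolution side (where the perturbation pushes $Q^{1,1}$ \emph{into} the cone) but is exactly the wrong sign for the supersolution side.
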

\begin{thm}Let $\mathcal{C}$ and $G$ satisfy the above assumptions \textbf{a)}, \textbf{b)} and \textbf{c)}. Let $u$ be a bounded viscosity subsolution and $v$ be a bounded viscosity supersolution of (\ref{general}). If $u\leq v$ on $\partial\Omega$ then $u\leq v$ on $\Omega$. 
\end{thm}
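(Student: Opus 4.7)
The plan is to mirror the proof of Theorem A, replacing the sub-additivity of $\Lambda_1$ by the more flexible assumption \textbf{c)}. Suppose for contradiction that $(u-v)(z_0)=\max_{\overline{\Omega}}(u-v)>0$ at some $z_0\in\Omega$. Exactly as at the end of the proof of Theorem A, by considering $\tilde{u}_{\varepsilon}(z)=u(z)+\varepsilon(|z-z_0|^2-B)$ with $B$ so large that $\tilde{u}_{\varepsilon}\leq u$ on $\partial\Omega$ (and noting that $\tilde{u}_{\varepsilon}-\varepsilon|z-z_0|^2=u-\varepsilon B$ is still a subsolution), it suffices to treat the case in which $u-\varepsilon|z-z_0|^2$ is itself a viscosity subsolution and derive a contradiction there; the general case then follows by letting $\varepsilon\searrow 0$.

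Assume now $u-\varepsilon|z-z_0|^2$ is a subsolution and set $u_{\varepsilon}:=u-\tfrac{\varepsilon}{2}|z-z_0|^2$. Since $D_{\C}^2 u_{\varepsilon}=D_{\C}^2(u-\varepsilon|z-z_0|^2)+\tfrac{\varepsilon}{2}I$ with $\tfrac{\varepsilon}{2}I\in\mathcal{C}_n$, assumption \textbf{c)} upgrades $u_{\varepsilon}$ to a strict subsolution in the sense of jets:
\begin{equation*}
G(Q)\geq f(z)+\tfrac{C\varepsilon}{2}\qquad\text{for every }(p,Q)\in\bar{J}^{2,+}u_{\varepsilon}(z),
\end{equation*}
while $u_{\varepsilon}-v$ still attains a strict positive maximum at $z_0\in\Omega$ and is nonpositive on $\partial\Omega$. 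I then run the variable-doubling machinery verbatim: the penalized function $\phi_{\alpha}(z,w)=u_{\varepsilon}(z)-v(w)-(2\alpha)^{-1}|z-w|^2$ has maxima $(z_{\alpha},w_{\alpha})$ that, by Lemma \ref{aa}, lie in $\Omega\times\Omega$ and converge to $(z_0,z_0)$, and Jensen-Ishii's maximum principle with $\gamma=\alpha$ yields $(\underline{p},\underline{Q})\in\bar{J}^{2,+}u_{\varepsilon}(z_{\alpha})$ and $(\overline{p},\overline{Q})\in\bar{J}^{2,-}v(w_{\alpha})$ whose Hermitian $(1,1)$-parts satisfy $\underline{H}\leq\overline{H}$, together with $\underline{H}\in\bar{\mathcal{C}}$ and $G(\underline{H})\geq f(z_{\alpha})+C\varepsilon/2$.

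The decisive step is upgrading $\underline{H}\leq\overline{H}$ to the monotonicity $G(\underline{H})\leq G(\overline{H})$. Applying \textbf{c)} (after, if needed, a small perturbation of $\underline{H}$ into the open cone $\mathcal{C}$) with $P=\overline{H}-\underline{H}+\eta I\in\mathcal{C}_n$, $\eta>0$, yields
\begin{equation*}
G(\overline{H}+\eta I)-G(\underline{H})\geq C\,\Lambda_1(\overline{H}-\underline{H}+\eta I)\geq C\eta,
\end{equation*}
which both forces $\overline{H}+\eta I\in\bar{\mathcal{C}}$ and, upon letting $\eta\to 0$ and invoking the continuity of $G$, gives $\overline{H}\in\bar{\mathcal{C}}$ together with $G(\overline{H})\geq G(\underline{H})$. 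The supersolution inequality for $v$ then yields $G(\overline{H})\leq f(w_{\alpha})$, and chaining the three estimates:
\begin{equation*}
f(z_{\alpha})+\tfrac{C\varepsilon}{2}\leq G(\underline{H})\leq G(\overline{H})\leq f(w_{\alpha}).
\end{equation*}
Sending $\alpha\to 0$ and using continuity of $f$ at $z_0$ produces the contradiction $f(z_0)+C\varepsilon/2\leq f(z_0)$.

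The main obstacle is precisely this last step: since $G$ need not be sub-additive and the cone $\mathcal{C}$ need not be convex (as for $G=\Lambda_k$ with $k\geq 2$), neither Weyl-type inequalities nor concavity are available, and both the monotonicity of $G$ along the nonnegative direction $\overline{H}-\underline{H}$ and the membership $\overline{H}\in\bar{\mathcal{C}}$—required in order to invoke the supersolution property of $v$—have to be extracted from assumption \textbf{c)} alone, which is exactly what the $\eta$-approximation above accomplishes.
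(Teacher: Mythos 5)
Your proposal follows the same route as the paper: reduce to the case where $u-\varepsilon|z-z_0|^2$ is a subsolution, use assumption \textbf{c)} to obtain a strict sub-inequality, double the variables via Jensen--Ishii, and then use the monotonicity of $G$ (ellipticity, which the paper notes is a consequence of \textbf{c)}) to pass from $\underline{H}\leq\overline{H}$ to $G(\underline{H})\leq G(\overline{H})$. You merely spell out, via the $\eta$-approximation, the ellipticity and cone-membership steps that the paper's terse proof leaves implicit, so the argument is correct and essentially identical.
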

\begin{proof}The proof is almost the same as the proof of the comparison principle for the first eigenvalue. We will explain only where we use our assumptions. We use now our assumption \textbf{c)} we have
\begin{eqnarray}\label{strict2}
G(D_{\C}^2u_{\varepsilon})(z)&=&G \left(D_{\C}^2\left(u(z)-\varepsilon|z-z_0|^2+\frac{\varepsilon}{2}|z-z_0|^2\right)\right)\nonumber\\
&\geq &  G\left(D_{\C}^2\left(u(z)-\varepsilon|z-z_0|^2\right)\right)+\Lambda_1\left(\frac{\varepsilon I}{2}\right)\nonumber\\
&\geq & f(z)+\frac{\varepsilon}{2}.
\end{eqnarray}
From the inequality (\ref{Hermitian}) and ellipticity  of $G$, we have 
\begin{equation}
 G(\underline{H})\leq G(\overline{H}).
\end{equation}
From the inequality (\ref{strict2}) we have 
\begin{equation}
G(\underline{H}) \geq f(z_j)+\frac{\varepsilon}{2}.
\end{equation}
As $v$ is a supersolution we have
\begin{equation}
G(\overline{H})\leq f(w_j).
\end{equation}
 The proof now continues as in the proof of the comparison principle for the first eigenvalue.
\end{proof}
\begin{thm}
Let $\mathcal{C}$ and $ G$ satisfy the above assumptions \textbf{a)}, \textbf{b)} and \textbf{c)}. Let $\varphi$ be a continuous function on $\partial\Omega$. Let  $\underline{\phi}$ be a  subsolution and  $\overline{\phi}$ supersolution of (\ref{general}) such that
$$ \underline{\phi}_{*}=\overline{\phi}^{*}=\varphi\; \text{on}\; \partial\Omega.$$
Then the envelope of subsolutions is given  as follows:
$$u(z):=\sup \{v(z)\in L^{\infty}(\Omega)\cap USC(\Omega): v\; \text{is a subsolutin of (\ref{first})},\; \underline{\phi}\leq v\leq \overline{\phi} \}.$$
 $u$ is the unique viscosity solution to the problem (\ref{general}) such that $u=\varphi $\; on $\partial \Omega.$
\end{thm}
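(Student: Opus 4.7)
The proof will follow the same Perron-method strategy used in Theorem \ref{en}, adapted to the general operator $G$; assumptions \textbf{a)}, \textbf{b)}, \textbf{c)} will be used only at the few places where the structure of $\Lambda_1$ was invoked there. First, note that the admissible family is nonempty because $\underline{\phi}$ itself lies in it, and it is uniformly bounded by $\overline{\phi}$, so $u$ is well-defined and bounded in $\overline{\Omega}$. The argument then splits into three parts: showing $u^*$ is a subsolution, showing $u_*$ is a supersolution, and concluding via the comparison principle just proved.

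For the first part, the proof proceeds exactly as in the first half of Theorem \ref{en}. The key ingredients are: (i) the finite maximum of subsolutions is a subsolution, which is immediate from the test-function definition, and (ii) a monotone-limit stability statement, the analog of Lemma \ref{convergence} for $G$. The proof of that analog goes through verbatim once one replaces $\Lambda_1$ by $G$ everywhere, because the only property invoked is the continuity of the symbol $H(z,Q)$ on its domain, which is guaranteed by assumption \textbf{b)}. Writing $u_j = (\sup_{k\geq j} v_k)^*$ as a decreasing limit of monotone limits of finite maxima, we conclude $u^*$ is a subsolution of (\ref{general}).

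The delicate point is showing that $u_*$ is a supersolution, and here assumption \textbf{c)} is indispensable. Suppose for contradiction that at some $z_0\in\Omega$ there is a lower test function $q$ for $u_*$ with $D_\C^2 q(z_0)\in \bar{\mathcal{C}}$ and $G(D_\C^2 q(z_0))>f(z_0)$. By continuity of $G$ on $\bar{\mathcal{C}}$ and of $f$, there exist $r,\delta>0$ with
\[
G(D_\C^2 q(z))\geq f(z)+\delta \qquad \text{for } z\in \B(z_0,r).
\]
Set $q_\varepsilon(z)=q(z)+c-\varepsilon|z-z_0|^2$ with $c=\varepsilon r^2/2$. Writing $D_\C^2 q = D_\C^2 q_\varepsilon + \varepsilon I$ and applying assumption \textbf{c)} to $A=D_\C^2 q_\varepsilon(z)$ and $P=\varepsilon I\in \mathcal{C}_n$ gives
\[
G(D_\C^2 q(z)) - G(D_\C^2 q_\varepsilon(z)) \geq C\varepsilon,
\]
so choosing $\varepsilon$ small enough (and shrinking $r$ if needed to preserve $D_\C^2 q_\varepsilon\in \bar{\mathcal{C}}$ by continuity) we obtain $G(D_\C^2 q_\varepsilon(z))\geq f(z)$ in $\B(z_0,r)$, i.e. $q_\varepsilon$ is a subsolution there. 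On the boundary $|z-z_0|=r$ one has $q_\varepsilon\leq u_*$, so Proposition \ref{glue} allows us to glue $\max(u,q_\varepsilon)$ inside $\B(z_0,r)$ with $u$ outside and obtain a subsolution in $\Omega$ lying between $\underline{\phi}$ and $\overline{\phi}$; since $q_\varepsilon(z_0)-u_*(z_0)=c>0$ this strictly exceeds $u$ along a sequence $z_n\to z_0$ with $u(z_n)\to u_*(z_0)$, contradicting the definition of $u$.

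Finally, we combine the two conclusions with the comparison principle (the theorem immediately preceding this one, whose proof was just shown to extend to $G$). It yields $u^*\leq u_*$ in $\Omega$ up to boundary values, and since $\underline{\phi}\leq u\leq \overline{\phi}$ and $\underline{\phi}_*=\overline{\phi}^*=\varphi$ on $\partial\Omega$, the boundary trace is $\varphi$. Therefore $u=u^*=u_*$ is continuous and is the unique viscosity solution of (\ref{general}); uniqueness is a direct consequence of comparison. The main obstacle is the bump argument in the second part, where the quantitative lower bound in assumption \textbf{c)} is exactly what is needed to transfer the strict inequality $G(D_\C^2 q)>f$ to the perturbation $q_\varepsilon$; without \textbf{c)} no such strict-subsolution competitor would be available.
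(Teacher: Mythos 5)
Your overall Perron strategy matches the paper's (which simply refers back to the proof of Theorem~\ref{en}, pointing to continuity of $H$ and the comparison principle as the two inputs). However, the bump argument in your second step contains a genuine error in the direction of an inequality. You invoke assumption \textbf{c)} with $A=D_\C^2 q_\varepsilon(z)$ and $P=\varepsilon I$ to obtain
\[
G(D_\C^2 q(z)) - G(D_\C^2 q_\varepsilon(z)) \geq C\varepsilon,
\]
and then claim this yields $G(D_\C^2 q_\varepsilon)\geq f$. But this inequality is a \emph{lower} bound on the drop $G(D_\C^2 q)-G(D_\C^2 q_\varepsilon)$, i.e.\ it says $G(D_\C^2 q_\varepsilon)\leq G(D_\C^2 q)-C\varepsilon$. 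It bounds $G(D_\C^2 q_\varepsilon)$ from \emph{above}, not from below, and therefore cannot give the strict-subsolution property you want. What is actually needed is the opposite: an upper bound on how much $G$ can drop when passing from $D_\C^2 q$ to $D_\C^2 q-\varepsilon I$, and assumption \textbf{c)} says nothing in that direction. The correct tool is continuity of $G$ on $\bar{\mathcal{C}}$ (assumption \textbf{b)}): since $D_\C^2 q_\varepsilon(z)\to D_\C^2 q(z)$ uniformly on $\bar{\B}(z_0,r)$ as $\varepsilon\to 0$, uniform continuity of $G$ on the relevant compact set gives $G(D_\C^2 q_\varepsilon(z))\geq G(D_\C^2 q(z))-\delta\geq f(z)$ for $\varepsilon$ small. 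This is precisely what the paper flags as the key ingredient for this theorem, and it is what the $\Lambda_1$ argument in Theorem~\ref{en} implicitly uses (there one even has the exact identity $\Lambda_1(D_\C^2 q_\varepsilon)=\Lambda_1(D_\C^2 q)-\varepsilon$).

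Consequently your closing remark --- that assumption \textbf{c)} is ``indispensable'' for the bump argument and that ``without \textbf{c)} no such strict-subsolution competitor would be available'' --- misattributes where \textbf{c)} enters. Assumption \textbf{c)} is used in the comparison principle (it is what produces the strict inequality (\ref{strict2})), not in the construction of the competitor $q_\varepsilon$. A secondary, more minor point: before applying either \textbf{b)} or \textbf{c)} you need to ensure $D_\C^2 q_\varepsilon(z)$ stays in $\bar{\mathcal{C}}$ (for \textbf{b)}) resp.\ $\mathcal{C}$ (for \textbf{c)}); shrinking $r$ does not help if $D_\C^2 q(z_0)$ is on the boundary $\partial\mathcal{C}$, since the perturbation is subtracted, not added. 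This requires knowing $D_\C^2 q(z_0)$ is interior, which in the paper's examples follows because $G$ vanishes on $\partial\mathcal{C}$ and $G(D_\C^2 q(z_0))>f(z_0)>0$; it would be worth stating this explicitly.
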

\begin{proof}The proof is the same as the proof of Theorem \ref{en}. The things which make the proof work are the continuity of $H$ and the comparison principle.
\end{proof}
\section*{Acknowledgements}
\small{The author was supported by the Polish National Science Centre Grant 2017/26/E/ST1/00955. The author would like to thank S{\l}awomir Dinew for numerous discussions, suggestions and for reading the first version of this paper.}
\bibliographystyle{amsalpha}
\bibliography{abja}
\end{document}